
\documentclass[leqno,12pt,a4paper]{amsart}

\usepackage{amsmath,amssymb,amsthm}
\usepackage{enumerate}
\usepackage{graphics,color}
\usepackage{natbib}
\usepackage{hhline}

\setlength{\voffset}{-1cm}
\setlength{\hoffset}{0cm}
\setlength{\evensidemargin}{0cm}
\setlength{\oddsidemargin}{0cm}
\setlength{\textheight}{23.5cm}
\setlength{\textwidth}{16cm}
\setlength{\footskip}{2cm}

\newtheorem{thm}{Theorem}[section]
\newtheorem{cor}[thm]{Corollary}
\newtheorem{lem}[thm]{Lemma}
\newtheorem{prop}[thm]{Proposition}
\theoremstyle{definition}
\newtheorem{defn}[thm]{Definition}
\newtheorem{ass}[thm]{Assumptions}
\newtheorem{as}[thm]{Assumption}

\newtheorem{rem}[thm]{Remark}


\hsize=126mm
\vsize=180mm
\parindent=5mm


\begin{document}

\frenchspacing

\title[Splitting with approximation]{Operator splittings and spatial approximations for evolution equations}

\author[A. B\'{a}tkai]{Andr\'{a}s B\'{a}tkai}
\address{ELTE TTK, Institute of Mathematics, 1117 Budapest, P\'{a}zm\'{a}ny P. s\'{e}t\'{a}ny 1/C, Hungary.}
\email{batka@cs.elte.hu}

\author[P. Csom\'{o}s]{Petra Csom\'{o}s}
\address{Technische Universit\"{a}t Darmstadt, Fachbereich Mathematik, Schlo{\ss}gartenstr. 7, 64289 Darmstadt, Germany}
\email{csomos@cs.elte.hu}

\author[G. Nickel]{Gregor Nickel}
\address{Universit\"{a}t Siegen, FB 6 Mathematik, Walter-Flex-Str. 3, 57068 Siegen, Germany.}
\email{grni@fa.uni-tuebingen.de}

\subjclass{47D06, 47N40, 65J10, 34K06}
\keywords{Operator splitting, Trotter product formula, spatial approximation, $C_0$-semigroups, delay equation}

\date\today
\dedicatory{To Rainer Nagel, our teacher and friend.}

\begin{abstract}
The convergence of various operator splitting procedures, such as the sequential, the Strang and the weighted splitting, is investigated in the presence of a spatial approximation. To this end the relevant notions and results of numerical analysis are presented, a variant of Chernoff's product formula is proved and the general Trotter-Kato approximation theorem is used. The methods are applied to  an abstract partial delay differential equation.
\end{abstract}
\maketitle


\section{Introduction}

Operator splitting procedures are usually used to solve partial differential equations numerically. They can be considered as certain time-discretization methods which simplify or even make the numerical treatment of differential equations possible.

\noindent The idea behind operator splitting procedures is the following. Usually, a certain physical phenomenon is the combined effect of several processes. The behaviour of a physical quantity is described by a partial differential equation in which the local time derivative depends on the sum of the sub-operators corresponding to the different processes. These sub-operators usually are of different nature. For each sub-problem corresponding to each sub-operator there might be an effective numerical method providing fast and accurate solutions. For the sum of these sub-operators, however, we usually cannot find an adequate method. Hence, application of operator splitting procedures means that instead of the sum we treat the sub-operators separately. The solution of the original problem is then obtained from the numerical solutions of the sub-problems. A simple splitting procedure was proposed by Bagrinovskii and Godunov (see \cite{Bagrinovskii-Godunov}) in 1957 as an example. However, they were systematically studied only in 1968 by Marchuk (see \cite{Marchuk}) and Strang (see \cite{Strang},\cite{Strang2}). Since then operator splitting procedures have been widely applied to various physical processes, see e.g., Zlatev \cite{Zlatev}, Zlatev and Dimov \cite{Zlatev-Dimov}, or Botchev et al. \cite{Botchev-Farago-Horvath}.

\noindent Although operator splitting combined with spatial approximations is widely used in applications, the theoretical convergence analysis of these problems seems to be still missing. One aim of this paper is to investigate systematically the convergence of various splitting procedures combined with spatial approximations. This important question is motivated by numerical analysis and real-life applications but in the present paper it will be investigated in the framework of functional analysis. This means that we consider and apply the well-known theorems of Chernoff and Trotter\,--\,Kato from another point of view. They are just mathematical tools to prove the convergence of numerical schemes solving partial differential equations.

\noindent First, we recall various notions from numerical analysis concentrating on finite difference methods and, in particular, the  sequential, Strang, and weighted splitting is introduced. In Section 3 we give an overview on the convergence of these splitting procedures, and the convergence is investigated in the presence of a spatial approximation. The last section is devoted to the application of these results to delay equations. Further applications along with numerical experiments and error estimates will be presented in a subsequent paper.

\noindent Since we apply the results of operator semigroup theory, our main reference regarding notations and terminology is Engel and Nagel \cite{Engel-Nagel}. To avoid technical complications, we restrict ourselves to the case of two sub-operators. The general problem can be treated analogously. Further, we make the following assumption for the rest of this work.
\begin{as}\label{gen_ass_abc}
Assume that $X$ is a Banach space, $A$ and $B$ are closed, densely defined linear operators generating the strongly continuous operator semigroups $T=\big(T(t)\big)_{t\ge 0}$ and $S=\big(S(t)\big)_{t\ge 0}$, respectively. Further, we assume that the closure of $A+B$, $\overline{A+B}$ with $D(A)\cap D(B)\subset D(\overline{A+B})$ is also the generator of a strongly continuous semigroup $U=\big(U(t)\big)_{t\ge 0}$.
\label{as:genass}
\end{as}


\section{Finite difference methods and splitting procedures}

In this section we recall some basic notions of numerical analysis needed for our later investigations. Though the results presented are standard and well-known, it seemed to be appropriate to present them to a reader having his interests in evolution equations and operator theory. Since we use the \emph{finite difference method} for solving the equations numerically, we restrict ourselves to some of its important properties. Our discussion follows Section 6 of  Atkinson and Han \cite{Atkinson-Han}, but another relevant reference is Richtmyer and Morton \cite{Richtmyer-Morton}. Then we introduce three types of \emph{splitting procedures} investigated in the present work: sequential, Stang, and  weighted splittings. We will see that splitting procedures are special finite difference methods.


\subsection{Finite difference methods}

\noindent Finite difference methods approximate the differential operator by a difference operator. Thus, instead of the differential equation, a system of algebraic equations has to be solved at each time step. A numerical scheme is only applicable if it can provide numerical solution arbitrarily close to the exact solution, i.e., if the method is \emph{convergent}. In what follows, we define the basic concepts of numerical analysis we need later on.

\noindent Let $X$ be a Banach space and $G:D(G)\subset X\to X$ be a linear (usually unbounded), closed, densely defined operator. Consider the abstract Cauchy problem, also called initial value problem
\begin{equation}
\left\{
\begin{aligned}
\frac{\mathrm{d}u(t)}{\mathrm{d}t}&=Gu(t), \qquad t\ge 0, \\
u(0)&=x\in X.
\end{aligned}
\right.
\label{acp_num}
\end{equation}
Recall that a function $u: \mathbb{R}^+\to X$ is called a (classical) solution of the abstract Cauchy problem (\ref{acp_num}) if  the function $u$ is continuously differentiable, $u(t)\in D(G)$, and
\begin{equation}
\lim\limits_{h\to 0}\left\|\frac{u(t+h)-u(t)}{h}-Gu(t)\right\|=0
\nonumber
\end{equation}
with $u(0)=x$. This limit is understood to be the right limit at $t=0$. Since $G$ generates the strongly continuous semigroup $\big(U(t)\big)_{t\ge 0}$, it is clear that problem (\ref{acp_num}) has a unique solution of the form
\begin{equation}
u(t)=U(t)x \qquad \mbox{for all } t\ge 0, \ x\in D(G).
\label{numanal_sg}
\end{equation}
\noindent Although the solutions of (\ref{acp_num}) are defined for all $t\ge 0$, with the help of a computer we can only hope for a solution in finite time. Therefore, from now on we choose an arbitrary but fixed ``end point'' $t_0\in\mathbb{R}^+$, that is, we solve (\ref{acp_num}) numerically on the time interval $[0,t_0]$.
\begin{defn}
A \emph{finite difference method} is a one-parameter family of linear operators
\begin{equation}
F(h): X\to X, \qquad \mbox{where } h\in(0,t_0], \ t_0\in\mathbb{R}^+ \mbox{ fixed}.
\nonumber
\end{equation}
The \emph{approximate solution} $u_m(t)$ at time $t=mh$ is then defined by
\begin{equation}
u_m(t)=F(h)^mx = F(t/m)^m x \qquad \mbox{for } m\in\mathbb{N}.
\nonumber
\end{equation}
We usually refer to $h$ as the \emph{time step} of the numerical method.
\label{def:diff_meth}
\end{defn}
\noindent Now we can define the consistency, the convergence, and the stability of a difference method.
\begin{defn}[Consistency]
The difference method is called \emph{consistent} if for all $x\in D(G)$ and for the corresponding solutions $u$ of the abstract Cauchy problem (\ref{acp_num}) the following holds:
\begin{equation}
\lim\limits_{h\to 0}\left\|\frac{F(h)u(t)-u(t+h)}{h}\right\|=0
\nonumber
\end{equation}
uniformly for $t\in[0,t_0]$.
\label{def:consistency}
\end{defn}
Roughly speaking, consistency means that the approximating difference equations converge in some sense to the original abstract Cauchy problem.

\begin{defn}[Convergence]
The difference method is called \emph{convergent at time $t\in[0,t_0]$} if for fixed $t$ and for any $x\in X$ we have
\begin{equation}
\lim\limits_{h_i\to 0}\left\|F(h_i)^{m_i}x-u(t)\right\|=0,
\label{numanal_conv}
\end{equation}
where $(m_i)_{i\in\mathbb{N}}$ is a sequence of integers and $(h_i)_{i\in\mathbb{N}}$ is a null-sequence of step sizes such that $m_ih_i=t$.
\label{def:numanal_conv}
\end{defn}
\begin{rem}
Convergence means that the numerical solution tends to the exact solution of the problem as the time step tends to zero. We  defined the convergence only at a fixed time level. However, from the theorems presented later it follows that the convergence is uniform for $t$ in compact intervals.
\end{rem}
\begin{defn}[Stability]
The difference method is called \emph{stable} if the family of operators
\begin{equation}
\left\{ F(h)^m: \ h\in(0,t_0], \ mh\le t_0, \ m\in\mathbb{N} \right\}
\nonumber
\end{equation}
is uniformly bounded, i.e., there exists a constant $M>0$ such that
\begin{equation}
\|F(h)^m\|\le M \qquad \mbox{for } mh\le t_0 \mbox{ and for all } h\in(0,t_0].
\nonumber
\end{equation}
\label{def:stability}
\end{defn}
\noindent In other words, stability means that the approximating solutions remain bounded.

\noindent The above three notions are connected through the following result (see  Lax \cite[Theorem. 8 in Section 34.3]{Lax} and  Atkinson and Han \cite[Theorem. 6.2.11]{Atkinson-Han}).
\begin{thm}[Lax Equivalence Theorem]
Let us assume that the initial value problem (\ref{acp_num}) is well-posed. Then a consistent finite difference scheme is convergent if and only if it is stable.
\label{thm:lax}
\end{thm}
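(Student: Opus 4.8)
The plan is to prove the two implications separately, using throughout that well-posedness means precisely that $G$ generates the $C_0$-semigroup $U$, so that on the fixed compact interval $[0,t_0]$ the solution operators are uniformly bounded, $\tilde M:=\sup_{t\in[0,t_0]}\|U(t)\|<\infty$, and the domain $D(G)$ is dense and invariant under each $U(t)$.

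\emph{Stability and consistency imply convergence.} This is the constructive direction. First I would fix $x\in D(G)$ and $h,m$ with $mh=t\le t_0$. Since $U(kh)=U(h)^k$ and $U(t)=U(h)^m$, the difference of the two $m$-th powers telescopes:
\begin{equation}
F(h)^m x-U(t)x=\sum_{k=0}^{m-1}F(h)^{m-1-k}\big(F(h)-U(h)\big)U(kh)x.
\nonumber
\end{equation}
Because $U(kh)x\in D(G)$, consistency (applied at the time level $kh\le t_0$) bounds each factor $\|(F(h)-U(h))U(kh)x\|$ by $h\,\varepsilon(h)$, where $\varepsilon(h):=\sup_{s\in[0,t_0]}h^{-1}\|(F(h)-U(h))U(s)x\|\to 0$ as $h\to 0$; stability bounds every prefactor $\|F(h)^{m-1-k}\|$ by $M$. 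Summing the $m$ terms yields $\|F(h)^m x-U(t)x\|\le M\,mh\,\varepsilon(h)=Mt\,\varepsilon(h)\le Mt_0\,\varepsilon(h)\to 0$. To reach an arbitrary $x\in X$, I would approximate it by $y\in D(G)$ and split
\begin{equation}
\|F(h)^m x-U(t)x\|\le \|F(h)^m\|\,\|x-y\|+\|F(h)^m y-U(t)y\|+\|U(t)\|\,\|y-x\|;
\nonumber
\end{equation}
here stability and $\tilde M$ control the outer terms by $(M+\tilde M)\|x-y\|$, while the middle term vanishes as $h\to 0$ by the case already treated. A standard $3\varepsilon$-choice then gives convergence.

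\emph{Convergence implies stability.} This is the functional-analytic direction, and where I expect the real work to lie. The idea is to invoke the uniform boundedness principle. For fixed $x$, convergence forces every sequence $\big(F(h_i)^{m_i}x\big)$ with $h_i\to 0$ and $m_ih_i=t$ to converge, hence to be bounded, and Banach--Steinhaus then bounds the corresponding operators $\|F(h_i)^{m_i}\|$ along each such sequence. The main obstacle is to upgrade this into a single constant $M$ valid for the whole two-parameter family $\{F(h)^m:\ mh\le t_0\}$, rather than for one fixed time level or one sequence. I would argue by contradiction: if no uniform bound existed, I would extract $(h_j,m_j)$ with $m_jh_j\le t_0$ and $\|F(h_j)^{m_j}\|\to\infty$, pass to a subsequence with $m_jh_j\to s\in[0,t_0]$, and play this off against the pointwise boundedness coming from convergence to contradict Banach--Steinhaus. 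The delicate points are the mismatch between the exact relation $m_ih_i=t$ in the definition of convergence and the merely asymptotic $m_jh_j\to s$ produced by the compactness argument, together with the behaviour of the family for step sizes bounded away from zero; here well-posedness, in the guise of the uniform boundedness of $U$ on $[0,t_0]$, is what keeps the limiting targets under control. This is the step on which I would spend the most care, and for which the cited results of Lax and of Atkinson--Han provide the template.
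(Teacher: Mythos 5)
The paper does not prove this theorem at all: it is quoted from Lax and from Atkinson--Han, so there is no internal proof to compare against. Judged on its own terms, your forward implication (consistency and stability imply convergence) is complete and correct: the telescoping identity
$F(h)^m x-U(t)x=\sum_{k=0}^{m-1}F(h)^{m-1-k}\bigl(F(h)-U(h)\bigr)U(kh)x$,
the bound $h\,\varepsilon(h)$ on each summand coming from the uniform-in-$t$ consistency of Definition \ref{def:consistency}, the factor $M$ from stability, and the density argument for general $x\in X$ are exactly the standard proof, and nothing is missing there.

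The converse implication, however, is a strategy rather than a proof, and the difficulty you flag is precisely where your sketch breaks down. Banach--Steinhaus requires pointwise boundedness of the \emph{whole} family $\{F(h)^m:\ mh\le t_0\}$, but the convergence hypothesis of Definition \ref{def:numanal_conv} only controls sequences with $h_i\to 0$ and $m_ih_i$ \emph{exactly} equal to a fixed $t$. Your contradiction argument extracts $(h_j,m_j)$ with $\|F(h_j)^{m_j}\|\to\infty$ and $m_jh_j\to s$, but such a sequence is not one to which the convergence hypothesis applies (membership of each pair $(h_j,m_j)$ in \emph{some} convergent sequence at time $t_j=m_jh_j$ gives no bound uniform in $j$), and the case where the $h_j$ stay bounded away from $0$ is not touched by convergence at all, since each $F(h)$ individually being bounded does not make $h\mapsto\|F(h)\|$ locally bounded. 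Closing this requires either the stronger, grid-uniform notion of convergence used in the cited references, or an explicit local-boundedness/continuity assumption on $h\mapsto F(h)$; as written, the reduction to Banach--Steinhaus does not go through. Since the paper only ever uses the implication you did prove (stable and consistent implies convergent, as it notes when comparing with Chernoff's theorem), this gap is harmless for the rest of the paper, but it is a genuine gap in the proof of the stated equivalence.
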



\subsection{Splitting procedures}

In order to introduce the operator splitting procedures, we consider the following abstract Cauchy problem on the Banach space $X$ :
\begin{equation}
\left\{
\begin{aligned}
\frac{\mathrm{d}u(t)}{\mathrm{d}t}&=(A+B)u(t), \qquad t\ge 0, \\
u(0)&=x\in X,
\label{acpspl}
\end{aligned}
\right.
\tag{$\mathrm{ACP}$}
\end{equation}
where the operators $A$ and $B$ satisfy Assumption \ref{gen_ass_abc}.

\noindent
Since operator splitting procedures are time-discretization methods (see the explanation below), analogously to the numerical time step introduced in Definition \ref{def:diff_meth}, we choose the \emph{splitting time step} of length $\tau\in\mathbb{R}^+$.
\begin{defn}
The solution obtained by applying a splitting procedure is called \emph{split solution}. We remark that the split solution is only defined on the mesh
\begin{equation}
\omega_\tau:=\{k\tau,\ k\in\mathbb{N}\}.
\end{equation}
\end{defn}
\noindent In the following we collect three splitting procedures (see, e.g., Bagrinovskii and Godunov \cite{Bagrinovskii-Godunov}, Strang \cite{Strang}, Marchuk \cite{Marchuk}, Farag\'{o} \cite{Farago}), and consider the corresponding sub-problems with their solutions. Recall that we assumed that $\big(A,D(A)\big)$ and $\big(B,D(B)\big)$ generate the strongly continuous semigroups $\big(T(t)\big)_{t\ge 0}$ and $\big(S(t)\big)_{t\ge 0}$, respectively (cf. Assumption \ref{as:genass}).


\subsubsection{Sequential splitting.} As we have already mentioned in the Introduction, applying splitting procedures means that we solve sub-problems related to the sub-operators $A$ and $B$ separately. For sequential splitting this process can be formulated as follows.
\begin{equation}
\left\{
\begin{aligned}
\dfrac{\mathrm{d} u_1^{(k)}(t)}{\mathrm{d} t}&=Au_1^{(k)}(t), \qquad
t\in \big((k-1)\tau,k\tau\big], \smallskip \\
u_1^{(k)}((k-1)\tau)&=u^\mathrm{sq}((k-1)\tau), \smallskip \\
\end{aligned} \right.
\nonumber
\end{equation}
\begin{equation}
\left\{ \begin{aligned}
\dfrac{\mathrm{d} u_2^{(k)}(t)}{\mathrm{d} t}&=Bu_2^{(k)}(t), \qquad
t\in \big((k-1)\tau,k\tau\big], \smallskip \\
u_2^{(k)}((k-1)\tau)&=u_1^{(k)}(k\tau), \smallskip \\
u^\mathrm{sq}(k\tau)&:=u_2^{(k)}(k\tau),
\end{aligned} \right.
\nonumber
\end{equation}
with $k\in\mathbb{N}$ and $u^\mathrm{sq}(0)=x$. Using that $u_1^{(k)}(t)=T(t)u^\mathrm{sq}((k-1)\tau)$, and that $u_2^{(k)}(t)=S(t)u_1^{(k)}(k\tau)=S(t)T(\tau)u^\mathrm{sq}((k-1)\tau)$, we see by a simple induction argument that the  split solution $u^\mathrm{sq}(k\tau)$, obtained by applying the sequential splitting procedure, can be written as
\begin{equation}
u^\mathrm{sq}(k\tau)=[S(\tau)T(\tau)]^kx \qquad \mbox{for } k\in\mathbb{N},\, k\tau\leq t_0, \mbox{ and } x\in X.
\label{spl_sq0}
\end{equation}
\noindent Since this is a finite difference method, the convergence of a splitting procedure also needs to be investigated at a certain time level. Therefore, we write formula (\ref{spl_sq0}) in a more convenient way. Now we do not fix the value of the splitting time step $\tau$, but fix the value of $t\ge 0$. With $\tau:=t/n$ we obtain the  solution
\begin{equation}
u^\mathrm{sq}_n(t)=[S(t/n)T(t/n)]^nx
\label{spl_sq}
\end{equation}
for all $n\in\mathbb{N}$, $t\ge 0$ and $x\in X$. Hence, sequential splitting is a finite difference method with
\begin{equation*}
F^{sq}(h)=S(h)T(h), \qquad h\in(0,t_0].
\end{equation*}


\subsubsection{Strang splitting.} In the case of this splitting technique three sub-problems have to be solved for one splitting time step:
\begin{equation}
\left\{ \begin{aligned}
\dfrac{\mathrm{d} u_1^{(k)}(t)}{\mathrm{d} t}&=Au_1^{(k)}(t),
\qquad t\in \left((k-1)\tau,\left(k-\tfrac{1}{2}\right)\tau\right], \smallskip \\
u_1^{(k)}((k-1)\tau)&=u^\mathrm{St}((k-1)\tau), \smallskip \\
\end{aligned} \right.
\nonumber
\end{equation}
\begin{equation}
\left\{ \begin{aligned}
\dfrac{\mathrm{d} u_2^{(k)}(t)}{\mathrm{d} t}&=Bu_2^{(k)}(t),
\qquad t\in \big((k-1)\tau,k\tau\big], \smallskip \\
u_2^{(k)}((k-1)\tau)&=u_1^{(k)}\left(\left(k-\tfrac{1}{2}\right)\tau\right),
\end{aligned} \right.
\nonumber
\end{equation}
\begin{equation}
\left\{ \begin{aligned}
\dfrac{\mathrm{d} u_3^{(k)}(t)}{\mathrm{d} t}&=Au_3^{(k)}(t),
\qquad t\in \left(\left(k-\tfrac{1}{2}\right)\tau,k\tau\right], \smallskip \\
u_3^{(k)}\left(\left(k-\tfrac{1}{2}\right)\tau\right)&=u_2^{(k)}(k\tau), \smallskip \\
u^\mathrm{St}(k\tau)&:=u_3^{(k)}(k\tau),
\end{aligned} \right.
\nonumber
\end{equation}
where $u^\mathrm{St}(0)=x$ and $k\in\mathbb{N}$. The split solution can be written in this case, using the same arguments as above, as
\begin{equation}
u^\mathrm{St}(k\tau)=[T(\tau/2)S(\tau)T(\tau/2)]^kx \qquad \mbox{for } k\in\mathbb{N} \mbox { and } x\in X.
\label{spl_st0}
\end{equation}
As above, substituting $\tau:=t/n$ with $t\ge 0$ fixed, we have
\begin{equation}
u^\mathrm{St}_n(t)=[T(t/2n)S(t/n)T(t/2n)]^nx
\label{spl_st}
\end{equation}
for all $n\in\mathbb{N}$, $t\ge 0$ and $x\in X$. Hence, Strang splitting is a finite difference method with
\begin{equation*}
F^{St}(h)=T(h/2)S(h)T(h/2), \qquad h\in(0,t_0].
\end{equation*}


\subsubsection{Weighted splitting.} It is obtained by using two sequential splittings: first starting with operator $A$, and then starting with operator $B$. At time $t=k\tau$ the split solution is computed as a weighted average of the split solutions obtained by the two sequential splitting steps:
\begin{equation}
u^\mathrm{w}(k\tau)=\Theta u^{\mathrm{sq},AB}(k\tau)+(1-\Theta) u^{\mathrm{sq},BA}(k\tau),
\nonumber
\end{equation}
where $\Theta\in(0,1)$ is a given weight parameter, and $u^{\mathrm{sq},AB}(k\tau)$ and $u^{\mathrm{sq},BA}(k\tau)$ are the split solutions of the above two sequential splittings at time $k\tau$, respectively. In this case the split solution has the form
\begin{equation}
u^\mathrm{w}(k\tau)=\left[\Theta S(\tau)T(\tau) +(1-\Theta)T(\tau)S(\tau)\right]^k x
\label{spl_w0}
\end{equation}
for $k\in\mathbb{N}$ and $x\in X$. With varying splitting time step $\tau:=t/n$ and fixed $t\ge 0$, the above formula can be rewritten as
\begin{equation}
u^\mathrm{w}_n(t)=\left[\Theta S(t/n)T(t/n)+(1-\Theta)T(t/n)S(t/n)\right]^n x
\label{spl_w}
\end{equation}
for all $n\in\mathbb{N}$, $t\ge 0$, and $x\in X$. Hence, weighted splitting is a finite difference method with
\begin{equation*}
F^{w}(h)=\Theta S(h)T(h)+(1-\Theta)T(h)S(h), \qquad h\in(0,t_0].
\end{equation*}
\noindent  The case $\Theta =\frac{1}{2}$ is called \emph{symmetrically weighted splitting}.

\begin{rem}
Let us make some notes on the various splitting procedures. The consistency (and its order) of the different splittings were shown by Bj{\o}rhus in \cite{Bjorhus} and Farag\'{o} and Havasi in \cite{Farago-Havasi} for general $C_0$-semigroups under a restrictive domain condition on the generators. For matrices it is well-known that the sequential splitting is of first order, the Strang and the symmetrically weighted splitting is of second order. The weighted splittigs were introduced because they allow the use of parallel computations. For analytic and unitary (semi)groups in Hilbert spaces, the convergence of sequential and Strang splittings were also investigated see, e.g., Ichinose et al. \cite{Ichinose-etal}, Neidhardt and Zagrebnov \cite{Neidhardt-Zagrebnov} and \cite{Neidhardt-Zagrebnov2}, and Zagrebnov \cite{Zagrebnov}. In this context this means that the splitting procedures converge also in the operator norm and we get a uniform error estimate. It is a very interesting and promising direction, but unfortunately not applicable for a large class of equations, see the delay equation example in the last section. This is why we investigate strong convergence in this paper.
\end{rem}


\section{Convergence of the splitting procedures}

First we show that the investigated splitting procedures are convergent if the exact solutions of the sub-problems are known. Then the convergence is showed also for the case when the solutions of the sub-problems are only approximated by certain spatial and temporal numerical schemes.


\subsection{Convergence with exact solutions}

Since operator splitting procedures are finite difference methods, their convergence is defined as in Definition \ref{def:numanal_conv}. More precisely, a splitting procedure is called \emph{convergent at a fixed time level} $t\ge 0$ if
\begin{equation}
\lim\limits_{n\to\infty}\|[F^\mathrm{spl}(t/n)]^nx-U(t)x\|=0 \qquad \mbox{for all } x\in X,
\nonumber
\end{equation}
with index `sq', `St', or `w' for the sequential, Strang, and weighted splitting, respectively. The convergence of the splittings will be a consequence of Chernoff's Theorem (see Chernoff \cite{Chernoff}, and  Engel and Nagel \cite[Corollary III.5.3]{Engel-Nagel}). Let us observe that Chernoff's Theorem and one direction of Lax's Theorem \ref{thm:lax} state the same: the stable and consistent methods are convergent.

\noindent Since the stability condition appearing in Chernoff's Theorem will play an important role in obtaining the convergence of the splitting method, we cite Lemma 2.3 from the paper \cite{Csomos-Nickel}.
\begin{lem}
Let us assume that there exist constants $M\ge 1$ and $\omega\in\mathbb{R}$ such that
\begin{equation}
\|[S(t/n)T(t/n)]^n\|\le Me^{\omega t} \qquad \mbox{for all } t\ge 0,\ n\in\mathbb{N}.
\label{stab_0}
\end{equation}
Then the followings hold.
\renewcommand{\labelenumi}{(\roman{enumi})}
\begin{enumerate}
\item\label{stab_1}
There exist constants $M_1\ge 1$, $\omega_1\in\mathbb{R}$ such that
\begin{equation}
\|[S(t/n)T(t/n)]^{n-1}\|\le M_1e^{\omega_1 t} \qquad \mbox{for all } t\ge 0,\ n\in\mathbb{N}.
\nonumber
\end{equation}
\item\label{stab_2} There exist constants $M_2\ge 1$, $\omega_2\in\mathbb{R}$ such that
\begin{equation}
\|[T(t/n)S(t/n)]^n\|\le M_2e^{\omega_2 t} \qquad \mbox{for all } t\ge 0,\ n\in\mathbb{N}.
\nonumber
\end{equation}
\item\label{stab_3}
There exist constants $M_3\ge 1$, $\omega_3\in\mathbb{R}$ such that
\begin{equation}
\|[S(t/2n)T(t/n)S(t/2n)]^n\|\le M_3e^{\omega_3 t} \qquad \mbox{for all } t\ge 0,\ n\in\mathbb{N}.
\nonumber
\end{equation}
\end{enumerate}
\label{lem:stab}
\end{lem}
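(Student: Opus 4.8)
The plan is to derive each of the three stability estimates from the single hypothesis \eqref{stab_0} by elementary algebraic manipulations, exploiting the fact that conjugation or a single factor of a bounded operator changes the bound only by a harmless constant and an adjusted exponential rate. The key observation throughout is that since $T$ and $S$ are $C_0$-semigroups, there exist constants $M_T,M_S\ge 1$ and $\omega_T,\omega_S\in\mathbb{R}$ with $\|T(s)\|\le M_T e^{\omega_T s}$ and $\|S(s)\|\le M_S e^{\omega_S s}$ for all $s\ge 0$. These individual bounds are what allow me to absorb an extra leading or trailing factor into the desired $M_i e^{\omega_i t}$ form.

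For part \eqref{stab_1}, I would write
\begin{equation}
[S(t/n)T(t/n)]^{n}=[S(t/n)T(t/n)]^{n-1}\cdot S(t/n)T(t/n),
\nonumber
\end{equation}
but since I want to bound the $(n-1)$-st power, it is cleaner to peel off one factor from the \emph{front}: using $[S(t/n)T(t/n)]^{n}=S(t/n)T(t/n)[S(t/n)T(t/n)]^{n-1}$ is not directly useful because the hypothesis bounds the full $n$-th power, not an arbitrary tail. Instead I would simply estimate
\begin{equation}
\|[S(t/n)T(t/n)]^{n-1}\|\le \|T(t/n)^{-1}\|\cdots
\nonumber
\end{equation}
which fails since semigroups need not be invertible; so the honest route is to reindex. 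Setting $s:=\frac{n-1}{n}t$ so that $t/n=s/(n-1)$, the hypothesis applied at the time $s$ with index $n-1$ gives $\|[S(s/(n-1))T(s/(n-1))]^{n-1}\|\le M e^{\omega s}\le M e^{|\omega| t}$, and since $s\le t$ this is exactly the claimed bound with $M_1=M$ and $\omega_1=|\omega|$ (the small-$n$ cases $n=1$ being trivial). This reindexing trick is the cleanest way to reduce an $(n-1)$-power back to the hypothesis.

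For part \eqref{stab_2}, I would relate the $BA$-ordered product to the $AB$-ordered one by the cyclic identity
\begin{equation}
[T(t/n)S(t/n)]^{n}=T(t/n)\,[S(t/n)T(t/n)]^{n-1}\,S(t/n),
\nonumber
\end{equation}
so that taking norms and inserting the individual semigroup bounds together with part \eqref{stab_1} yields $\|[T(t/n)S(t/n)]^{n}\|\le M_T e^{\omega_T t/n}\,M_1 e^{\omega_1 t}\,M_S e^{\omega_S t/n}$, and since the $t/n$-exponents are bounded by $\max\{0,\omega_T,\omega_S\}\,t$, this collapses to $M_2 e^{\omega_2 t}$ with explicit constants. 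Part \eqref{stab_3} is handled identically: the symmetric product satisfies $[S(t/2n)T(t/n)S(t/2n)]^{n}=S(t/2n)[T(t/n)S(t/n)]^{n-1}T(t/n)S(t/2n)$, reducing it to part \eqref{stab_2} together with two more individual semigroup factors.

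The only genuine subtlety—and the step I would be most careful about—is the reindexing in part \eqref{stab_1}, where one must verify that $e^{\omega s}$ is uniformly controlled by $e^{\omega_1 t}$ for both signs of $\omega$ and for all $n$ including the boundary value $n=1$; once that is pinned down, parts \eqref{stab_2} and \eqref{stab_3} are routine applications of the cyclic factorization combined with the a priori exponential bounds on the individual semigroups $T$ and $S$.
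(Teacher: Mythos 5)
Your argument is correct. Note that the paper does not actually prove this lemma; it is quoted as Lemma~2.3 of the cited paper by Csom\'os and Nickel, and your proof is essentially the standard one given there: the reindexing $s=\tfrac{n-1}{n}t$ to pull the $(n-1)$-st power back under hypothesis \eqref{stab_0}, and the cyclic factorizations $[T(h)S(h)]^{n}=T(h)[S(h)T(h)]^{n-1}S(h)$ and $[S(h/2)T(h)S(h/2)]^{n}=S(h/2)[T(h)S(h)]^{n-1}T(h)S(h/2)$ combined with the individual exponential bounds $\|T(s)\|\le M_Te^{\omega_T s}$, $\|S(s)\|\le M_Se^{\omega_S s}$. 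One small imprecision: in part (iii) the middle block is $[T(t/n)S(t/n)]^{n-1}$, an $(n-1)$-st power of the \emph{reversed} product, so you need the reversed-order analogue of part (i) --- obtained from part (ii) by exactly the same reindexing --- rather than part (ii) itself; this is a one-line patch using the technique you already set up, not a genuine gap.
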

\noindent Lemma \ref{lem:stab} states that the stability of the sequential splitting implies the stability of the Strang and  of the reverse order splitting.

\begin{rem}
The stability assumption for the weighted splitting can be formulated as follows. There exist constants $M_4\ge 1$, $\omega_4\in\mathbb{R}$ such that
\begin{align}
\label{stab_weighted} \|[\Theta S(t/n)T(t/n)+(1-\Theta) T(t/n)S(t/n)]^n\|\le M_4e^{\omega_4 t} & \qquad \mbox{and} \\
\nonumber \|[\Theta T(t/n)S(t/n)+(1-\Theta) S(t/n)T(t/n)]^n\|\le M_4e^{\omega_4 t} &
\end{align}
for all  $t\ge 0$, $n\in\mathbb{N}$, where $\Theta\in[0,1]$. Notice that, in general Condition \eqref{stab_0} does not seem to imply the stability condition for the weighted splitting. The question of giving a simpler condition implying the stability condition for the weighted splitting is still an open problem.
\end{rem}

\noindent The following proposition is a consequence of Chernoff's Theorem.
\begin{cor}
Under the Assumption \ref{gen_ass_abc}, the sequential and the Strang splittings are convergent at a fixed time level $t>0$ if the stability condition (\ref{stab_0}) is satisfied. The weighted splitting is convergent if the stability condition \eqref{stab_weighted} is satisfied.
\label{prop:spl-conv-if-stab}
\end{cor}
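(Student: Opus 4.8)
The plan is to verify, for each of the three splittings, the hypotheses of Chernoff's Theorem as cited, and then simply read off the conclusion. Writing $F^{\mathrm{spl}}(h)$ for the relevant finite difference operator (with $\mathrm{spl}\in\{\mathrm{sq},\mathrm{St},\mathrm{w}\}$), Chernoff's Theorem asserts that $[F^{\mathrm{spl}}(t/n)]^n x\to U(t)x$ for all $x\in X$, uniformly for $t$ in compact intervals, provided that (a) $F^{\mathrm{spl}}(0)=I$; (b) the family is stable, i.e.\ $\|[F^{\mathrm{spl}}(t/n)]^n\|\le Me^{\omega t}$; and (c) the strong derivative $\lim_{h\to 0^+}h^{-1}(F^{\mathrm{spl}}(h)x-x)$ exists on a subspace $D$ whose closure-of-restriction generates $U$, with $D$ a core for that generator. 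I would organise the proof around checking (a)--(c) in turn.

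Condition (a) is immediate, since $S(0)=T(0)=I$ forces each of $F^{\mathrm{sq}}(0)$, $F^{\mathrm{St}}(0)$ and $F^{\mathrm{w}}(0)$ to equal $I$. For (b), the stability of the sequential splitting is exactly the hypothesis \eqref{stab_0}, and that of the weighted splitting is exactly \eqref{stab_weighted}; the stability of the Strang splitting follows from Lemma \ref{lem:stab}, combining the bound on $[S(\cdot)T(\cdot)]^{n-1}$ provided there with the local boundedness of the half-step factors $T(t/2n)$ and the symmetry between $S$ and $T$. The core condition in (c) is in fact built into Assumption \ref{gen_ass_abc}: by definition $\overline{A+B}$ is the closure of the operator $A+B$ with domain $D(A)\cap D(B)$, so $D(A)\cap D(B)$ is automatically a core for its generator $\overline{A+B}$.

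The substance of the argument is therefore the consistency computation (c) on $D=D(A)\cap D(B)$. Here I would split each increment into telescoping pieces, for instance for the sequential splitting
\begin{equation*}
\frac{S(h)T(h)x-x}{h}=S(h)\,\frac{T(h)x-x}{h}+\frac{S(h)x-x}{h},
\end{equation*}
and for the Strang splitting
\begin{equation*}
\frac{T(\tfrac h2)S(h)T(\tfrac h2)x-x}{h}=T(\tfrac h2)S(h)\,\frac{T(\tfrac h2)x-x}{h}+T(\tfrac h2)\,\frac{S(h)x-x}{h}+\frac{T(\tfrac h2)x-x}{h}.
\end{equation*}
Letting $h\to0^+$ and using $h^{-1}(T(h)x-x)\to Ax$ and $h^{-1}(S(h)x-x)\to Bx$ yields the limit $(A+B)x$ in both cases (the Strang halves combining as $\tfrac12Ax+Bx+\tfrac12Ax$); the weighted case is the convex combination of the sequential computation taken in both orders, so it again yields $(A+B)x$. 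Thus the restriction of the strong derivative to $D(A)\cap D(B)$ is $(A+B)$, whose closure is $\overline{A+B}$, the generator of $U$.

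The one point requiring genuine care --- and the only real obstacle --- is justifying that the limit passes through the uniformly bounded prefactors $S(h)$, $T(h/2)S(h)$, etc. For this I would invoke the elementary fact that if $R(h)\to I$ strongly with $\sup_{h\in(0,h_0]}\|R(h)\|<\infty$ (guaranteed by strong continuity together with the exponential growth bounds of $T$ and $S$) and $y_h\to y$, then $R(h)y_h\to y$, via $\|R(h)y_h-y\|\le\|R(h)\|\,\|y_h-y\|+\|R(h)y-y\|$. With (a)--(c) verified, Chernoff's Theorem identifies $\overline{A+B}$ as the generator whose semigroup is the limit, giving $[F^{\mathrm{spl}}(t/n)]^n x\to U(t)x$ for each of the three splittings and hence convergence at every fixed $t>0$, which completes the proof.
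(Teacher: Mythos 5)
Your proposal is correct and follows essentially the same route as the paper: verify the hypotheses of Chernoff's Theorem, with stability supplied by \eqref{stab_0} (plus Lemma \ref{lem:stab} for Strang) or \eqref{stab_weighted}, and consistency on $D(A)\cap D(B)$ obtained from the telescoping decomposition of the increment. The only cosmetic differences are that the paper delegates the sequential and Strang cases to \cite[Corollaries 2.5 and 2.7]{Csomos-Nickel} and writes out only the weighted case, and that it justifies passing the limit through the bounded prefactors by citing \cite[Proposition A.3]{Engel-Nagel} rather than by your (equivalent) elementary estimate $\|R(h)y_h-y\|\le\|R(h)\|\,\|y_h-y\|+\|R(h)y-y\|$.
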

\begin{proof}
\mbox{} \\
\noindent \emph{Sequential and Strang splittings:} See \cite[Corollaries 2.5 and 2.7]{Csomos-Nickel}. \\
\emph{Weighted splitting:} \\
\noindent In order to show the convergence, we apply Chernoff's Theorem to the operator
\begin{equation}
F^\mathrm{w}(h):=\Theta S(h)T(h)+(1-\Theta)T(h)S(h) \qquad \Theta\in(0,1).
\nonumber
\end{equation}
The stability holds due to our assumptions. Using the semigroup property, we obtain that
\begin{align}
\nonumber & \lim\limits_{h\to 0} \dfrac{[\Theta S(h)T(h)+(1-\Theta)T(h)S(h)]x-x}{h} \medskip \\
\nonumber = & \Theta \lim\limits_{h\to 0} \dfrac{S(h)T(h)x-x}{h}
+(1-\Theta)\lim\limits_{h\to 0} \dfrac{T(h)S(h)x-x}{h} \medskip \\
\nonumber =& \Theta(Ax+Bx)+(1-\Theta)(Ax+Bx)=(A+B)x
\end{align}
for all $x\in D(A)\cap D(B)$ and $\Theta\in(0,1)$, since the topology of pointwise convergence on a dense subset of $X$ and the topology of uniform convergence on relatively compact subsets of $X$ coincide on bounded subsets of $\mathcal{L}(X)$, see e.g., Engel and Nagel \cite[Proposition A.3]{Engel-Nagel}. Thus, the consistency criterion of Chernoff's Theorem holds as well.
\end{proof}

\begin{rem}\label{rem:stab_cond}
Notice that in the important special cases, where the semigroups $S$ and $T$ are quasi-contractions or commuting, all the stability conditions are automatically satisfied.
\end{rem}


\subsection{Convergence with approximations}
\label{sc:approx}

In the previous section we investigated the convergence of the splitting procedures in the case when the sub-problems are solved exactly. In concrete problems, however, the exact solutions are not known. Therefore the use of a certain approximation scheme is needed to solve the sub-problems. When a partial differential equation is to be solved applying a splitting procedure together with approximation schemes, we have to follow these steps.
\renewcommand{\labelenumi}{\textbf{\arabic{enumi}.}}
\begin{enumerate}
\item The spatial differential operator is split into sub-operators of simpler form. (For instance according to the different physical phenomena or space directions, etc.)
\item Each sub-operator is approximated by an appropriate spatial discretization scheme (called \emph{semi-disc\-re\-ti\-za\-tion}). Then we obtain systems of ordinary differential equations corresponding to the sub-operators.
\item Each solution of the semi-discretized system is obtained by using a time-dis\-cre\-ti\-za\-tion method.
\end{enumerate}
\noindent In this section we investigate the case when the semi-discretized systems can be solved analytically, i.e., without using a time-disc\-re\-ti\-za\-tion method. That is, we assume that the semigroups are approximated by approximate semigroups (step \textbf{1} and step \textbf{2}). In the second case the semigroups are approximated by operators which are not necessarily semigroups. They represent the case when the solutions of the semi-discretized systems are obtained by using a time-disc\-re\-ti\-za\-tion scheme (step \textbf{1}, step \textbf{2}, and step \textbf{3}). This case will be investigated in our forthcoming work. We remark that the convergence of the splitting together with the time-disc\-re\-ti\-za\-tion method (without the spatial approximation scheme: step \textbf{1} and step \textbf{3}) is investigated, e.g., in \cite{Csomos-Farago}. \\

\noindent Consider the abstract Cauchy problem \eqref{acpspl} on the Banach space $X$ for the sum of the generators $\big(A,D(A)\big)$ and $\big(B,D(B)\big)$. As introduced e.g. by Ito and Kappel in \cite[Section 4.1]{Ito-Kappel} and Pazy in \cite[Section 3.6]{Pazy}, we define approximate spaces (``mesh'') and projection-like operators between the approximate spaces and the original space $X$.
\begin{defn}
Let $X_m$, $m\in\mathbb{N}$ be Banach spaces and take operators
\begin{equation}
P_m:\ X\rightarrow X_m \qquad \mbox{and} \qquad J_m:\ X_m\rightarrow X
\nonumber
\end{equation}
fulfilling  the following properties:
\renewcommand{\labelenumi}{(\roman{enumi})}
\begin{enumerate}
\item $P_mJ_m=I_m$ for all $m\in\mathbb{N}$, where $I_m$ is the identity operator in $X_m$,
\item $\lim\limits_{m\rightarrow\infty}J_mP_mx=x$ for all $x\in X$,
\item $\|J_m\|\le M_J$ and $\|P_m\|\le M_P$ for all $m\in\mathbb{N}$ and some given constants $M_J,M_P>0$.
\end{enumerate}
\label{def:proj}
\end{defn}

\noindent The operators $P_m$ together with the spaces $X_m$ usually refer to a kind of spatial discretization (triangulation, Galerkin approximation, Fourier coefficients, etc.), the spaces $X_m$ are usually finite dimensional spaces and the operators $J_m$ refer to the interpolation method, how we associate specific elements of the function space to the elements of the approximating spaces (linear/polynomial/spline interpolation, etc.).

\noindent First we split the operator $A+B$ appearing in the original problem \eqref{acpspl} into the sub-operators $A$ and $B$. In order to obtain the semi-discretized systems, the sub-operators $A$ and $B$ in equation \eqref{acpspl} have to be approximated by  operators $A_m$ and $B_m$ for $m\in\mathbb{N}$ fixed. Assume that the operators $A_m$ and $B_m$ generate the strongly continuous semigroups $\big(T_m(t)\big)_{t\ge 0}$ and $\big(S_m(t)\big)_{t\ge 0}$ on the space $X_m$, respectively. For the analysis of the convergence, we need the following definitions.

\begin{defn}
We consider the following properties of the semigroups $\big(T_m(t)\big)_{t\ge 0}$, $\big(S_m(t)\big)_{t\ge 0}$, $m\in\mathbb{N}$, and their generators $\big(A_m,D(A_m)\big)$, $\big(B_m,D(B_m)\big)$, $m\in\mathbb{N}$, respectively.
\renewcommand{\labelenumi}{(\roman{enumi})}
\renewcommand{\labelenumii}{(\alph{enumii})}
\begin{enumerate}
\item \emph{Stability:} \\
there exist constants $M,M_T,M_S\ge 1$ and $\omega, \omega_T,\omega_S\in\mathbb{R}$ such that
\begin{enumerate}
\item $\|T(h)\|\le M_T\mathrm{e}^{\omega_T h}$ \quad and \quad $\|T_m(h)\|\le M_T\mathrm{e}^{\omega_T h}$,
\item $\|S(h)\|\le M_S\mathrm{e}^{\omega_S h}$ \quad and \quad $\|S_m(h)\|\le M_S\mathrm{e}^{\omega_S h}$,
\end{enumerate} \medskip for all $h>0$, and either \medskip
\begin{equation}
\|[ S_m(t/n)T_m(t/n)]^k\|\le Me^{k\omega\frac{t}{n}} \qquad \mbox{for all } t\ge 0,\ k,n,m\in\mathbb{N}
\label{stab_approx}
\end{equation}
in case of the sequential and Strang splittings, or
\begin{equation}
\| \left[\Theta  S_m(t/n)T_m(t/n)+ (1-\Theta) T_m(t/n)S_m(t/n)\right]^k\|\le Me^{k\omega\frac{t}{n}}
\label{stab_approx_w}
\end{equation}
for a $\Theta\in[0,1]$ and for all $t\ge 0,\ k,n,m\in\mathbb{N}$ in case of the weighted splitting.
\item \emph{Consistency:}
\begin{enumerate}
\item $\lim\limits_{m\to\infty}J_mA_mP_mx=Ax$ \qquad for all $x\in D(A)$,
\item $\lim\limits_{m\to\infty}J_mB_mP_mx=Bx$ \qquad for all $x\in D(B)$.
\end{enumerate}
\end{enumerate}
The semigroups $T_m$, $S_m$, $m\in\mathbb{N}$, are called \emph{approximate semigroups}, and their generators $\big(A_m,D(A_m)\big)$, $\big(B_m,D(B_m)\big)$ are called \emph{approximate generators} if they possess the above properties.
\label{def:stab_conv}
\end{defn}
\begin{cor}
From the assumptions in Definition \ref{def:stab_conv} and from the Trotter\,--\,Kato Approximation Theorem (see Ito and Kappel \cite[Theorem 2.1]{Ito-Kappel2}) it follows that the approximate semigroups converge to the original semigroups uniformly, that is: \\
\noindent Convergence:
\renewcommand{\labelenumi}{(\alph{enumi})}
\begin{enumerate}
\item $\lim\limits_{m\to\infty}J_mT_m(h)P_mx=T(h)x$ \qquad $\forall x\in X$ and uniformly for $h\in[0,t_0]$,
\item $\lim\limits_{m\to\infty}J_mS_m(h)P_mx=S(h)x$ \qquad $\forall x\in X$ and uniformly for $h\in[0,t_0]$.
\end{enumerate}
\label{cor:conv}
\end{cor}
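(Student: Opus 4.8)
The plan is to read this corollary as a direct instance of the Trotter\,--\,Kato approximation theorem in the form of Ito and Kappel, so that the whole task reduces to checking that its hypotheses are met by the data collected in Definition \ref{def:stab_conv} and Definition \ref{def:proj}. I would argue for the pair $\big(T_m,T\big)$ and the operator $A$; the assertion (b) for $\big(S_m,S\big)$ and $B$ follows by the identical argument, using hypotheses (i)(b) and (ii)(b) in place of (i)(a) and (ii)(a). Throughout I take the well-definedness of $A_mP_mx$, i.e. $P_mx\in D(A_m)$, as built into hypothesis (ii)(a); in the intended applications the $X_m$ are finite dimensional and the $A_m$ bounded, so this is automatic.

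First I would dispatch \emph{stability}. The Ito\,--\,Kappel theorem asks for a uniform exponential bound on the approximating semigroups, and hypothesis (i)(a) supplies exactly $\|T_m(h)\|\le M_T\mathrm{e}^{\omega_T h}$ for all $h>0$ and all $m$. Combined with $\|J_m\|\le M_J$ and $\|P_m\|\le M_P$ from Definition \ref{def:proj}(iii), this yields the equiboundedness
\[
\|J_mT_m(h)P_m\|\le M_JM_PM_T\,\mathrm{e}^{\omega_T h}\qquad(h\ge0,\ m\in\mathbb{N}),
\]
which is the ingredient that upgrades pointwise convergence on a dense subset to convergence on all of $X$, and which also gives the Hille\,--\,Yosida bound $\|R(\lambda,A_m)\|\le M_T/(\Re\lambda-\omega_T)$ for $\Re\lambda>\omega_T$.

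The step I expect to be the main obstacle is the \emph{consistency} input, more precisely the passage from the generator convergence in hypothesis (ii)(a) to the resolvent convergence in which Trotter\,--\,Kato is classically phrased. Fixing $\lambda$ with $\Re\lambda>\omega_T$ and writing $y=R(\lambda,A)x\in D(A)$, the key algebraic identity is
\[
J_mR(\lambda,A_m)P_mx-R(\lambda,A)x=J_mR(\lambda,A_m)\big(A_mP_m-P_mA\big)R(\lambda,A)x+\big(J_mP_m-I\big)R(\lambda,A)x,
\]
which is checked by applying $(\lambda-A_m)$ and using $P_m(\lambda-A)y=P_mx$. In the first term I insert $P_mJ_m=I_m$ from Definition \ref{def:proj}(i) to rewrite $J_mR(\lambda,A_m)=J_mR(\lambda,A_m)P_mJ_m$, so that the bracket becomes $J_mA_mP_my-J_mP_mAy$; by hypothesis (ii)(a) this tends to $Ay$ and by Definition \ref{def:proj}(ii) the subtracted piece also tends to $Ay$, whence the difference tends to $0$ while $\|J_mR(\lambda,A_m)P_m\|$ stays bounded by stability. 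The second term tends to $0$ directly by Definition \ref{def:proj}(ii). Hence $J_mR(\lambda,A_m)P_mx\to R(\lambda,A)x$ for every $x\in X$; since $A$ generates a $C_0$-semigroup, $D(A)$ is dense and a core for $A$, so this resolvent consistency is exactly what the theorem requires.

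With stability and resolvent consistency established, the Ito\,--\,Kappel theorem delivers $\lim_{m\to\infty}J_mT_m(h)P_mx=T(h)x$ for all $x\in X$, the convergence being uniform for $h$ in the compact interval $[0,t_0]$, which is assertion (a). Repeating the argument with $(A,A_m,T,T_m)$ replaced by $(B,B_m,S,S_m)$ gives assertion (b), completing the proof. If the cited form of the theorem already takes generator consistency on a core as its hypothesis, the third paragraph is unnecessary and the corollary is immediate; I include the resolvent passage because that is where any real work lies.
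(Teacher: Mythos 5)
Your proposal is correct and follows the same route as the paper, which states this corollary with no written proof at all, treating it as an immediate consequence of the Trotter--Kato theorem of Ito and Kappel together with the stability bound (i) and the generator consistency (ii) of Definition \ref{def:stab_conv}; your contribution is to supply the verification the paper omits, in particular the resolvent identity converting generator consistency into the resolvent consistency required by the classical formulation. The only implicit hypothesis is $P_mx\in D(A_m)$ for $x\in D(A)$, which you correctly flag as built into the meaning of condition (ii)(a) and harmless in the intended finite-dimensional applications.
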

\noindent We remark that the stability condition (i)(a) or (b) in Definition \ref{def:stab_conv} is fulfilled for many cases in applications, for example, for the semidiscrete Galerkin method for parabolic equations (see Larsson and Thom\'{e}e \cite[Section 10.1] {Larsson-Thomee}) or a large class of Galerkin type approximations for hyperbolic or more general equations (as an example, see Fabiano \cite{Fabiano} or Fabiano and Turi \cite{Fabiano-Turi}).
\begin{defn}
For the case of spatial approximation, we define the split solutions of \eqref{acpspl} as
\begin{equation}
\nonumber u^\mathrm{spl}_{n,m}(t) := J_m[F^\mathrm{spl}_{m}(t/n)]^nP_mx
\end{equation}
for $m,n\in\mathbb{N}$ fixed and for $x\in X$, where index `spl' is `sq', `St', or `w'. The operators $F_m$, describing the approximation schemes together with the splitting procedures have the form
\begin{align}
\label{sq-approx} F^\mathrm{sq}_{m}(h) &:= S_m(h)T_m(h), \\
\label{st-approx} F^\mathrm{St}_{m}(h) &:= T_m(h/2)S_m(h)T_m(h/2), \\
\label{w-approx} F^\mathrm{w}_{m}(h) &:=
\Theta S_m(h)T_m(h)+(1-\Theta)T_m(h)S_m(h), \qquad \Theta\in(0,1)
\end{align}
for the sequential, Strang, and weighted splittings, respectively, with $h\in(0,t_0]$.
\label{def:split_solutions_approx}
\end{defn}
\begin{defn}
The numerical method for solving problem \eqref{acpspl} described above is \emph{convergent at a fixed time level $t>0$} if for all $\varepsilon>0$ there exists $N\in\mathbb{N}$ such that for all $n,m\ge N$ we have
\begin{equation}
\left\|u^\mathrm{spl}_{n,m}(t)-u(t)\right\|\le\varepsilon,
\nonumber
\end{equation}
where the index `spl' refers to `sq', `St', or `w', respectively. This is the usual well-known notion of the convergence of a sequence with two indices and we will use the notation
\begin{equation*}
\lim_{n,m\to\infty} u^\mathrm{spl}_{n,m}(t) = u(t)
\end{equation*}
to express this.
\label{def:conv-approx}
\end{defn}
\begin{rem}
Observe that, analogously to the case of exact solutions (i.e., splitting without approximation), the stability condition \eqref{stab_approx} implies the stability of the reversed order and the Strang splitting. Since the proof is analogous to the one of Lemma \ref{lem:stab}, we omit it.
\end{rem}
\noindent This means that the sequential and Strang splittings fulfill their stability condition if the stability condition (\ref{stab_approx}) of the sequential splitting (with approximations) holds. Therefore, in this case, it suffices to control only this condition. Notice, however, that for the weighted splitting we need the more complicated condition \eqref{stab_approx_w} (with approximation).

\noindent Notice that the convergence of a sequence of two indices is in general difficult to treat, except in a well-known special case we recall here in a form we shall need in our proofs later on.
\begin{lem}
Let $X$ be a Banach space, $D\subset X$ a dense subset in $X$, and $V(h),\, V_m(h): D\to X$ operator for $h\in [0,t_0]$ and $\overline V:D\to X$. Let us assume that
\begin{align}
\label{lim1} & \exists \lim\limits_{m\to\infty} V_m(h)x=V(h)x \quad \mbox{for all } x\in D, \mbox{ uniformly for } h\in[0,t_0] \mbox{ and} \\
\label{lim2} & \exists \lim\limits_{h\to 0} V(h)x=\overline Vx \quad \mbox{for all } x\in D.
\end{align}
Then
\begin{equation}
\lim\limits_{n,m\to\infty} V_m(t/n)x=\overline Vx \qquad \mbox{holds for all } x\in D
\nonumber
\end{equation}
and for all $t\in[0,t_0]$ fixed.
\label{lem:limits}
\end{lem}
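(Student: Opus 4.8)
The plan is to prove the statement directly from Definition \ref{def:conv-approx}, using a single triangle inequality in which I insert the intermediate quantity $V(t/n)x$. Fix $x\in D$ and $t\in[0,t_0]$. For arbitrary $n,m\in\mathbb{N}$ I would write
\begin{equation*}
\|V_m(t/n)x-\overline V x\|\le \|V_m(t/n)x-V(t/n)x\|+\|V(t/n)x-\overline V x\|,
\end{equation*}
and then estimate the two summands separately: the first by the approximation-in-$m$ hypothesis \eqref{lim1}, the second by the smallness-in-$h$ hypothesis \eqref{lim2}.

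For the second term I would note that, since $t\in[0,t_0]$ is fixed and $n\ge 1$, the argument $t/n$ stays in $[0,t_0]$ and tends to $0$ as $n\to\infty$. Hence \eqref{lim2} provides, for a prescribed $\varepsilon>0$, an index $N_1$ with $\|V(t/n)x-\overline V x\|<\varepsilon/2$ for all $n\ge N_1$; crucially this bound depends only on $n$ and not on $m$. For the first term the decisive feature is that the convergence $V_m(h)x\to V(h)x$ in \eqref{lim1} is \emph{uniform} in $h\in[0,t_0]$: this is exactly what lets me control the difference at the moving argument $h=t/n$. There is an $N_2$ such that $\sup_{h\in[0,t_0]}\|V_m(h)x-V(h)x\|<\varepsilon/2$ for all $m\ge N_2$, and in particular $\|V_m(t/n)x-V(t/n)x\|<\varepsilon/2$ holds for every $n$ simultaneously. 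Setting $N=\max(N_1,N_2)$ then yields $\|V_m(t/n)x-\overline V x\|<\varepsilon$ for all $n,m\ge N$, which is precisely the double-limit assertion in the sense of Definition \ref{def:conv-approx}.

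The computation is otherwise entirely routine, so I do not expect any genuine difficulty. The one place that requires care — and the reason \eqref{lim1} is deliberately stated with uniformity in $h$ rather than mere pointwise convergence — is this first term, where the argument $t/n$ varies with $n$. If the convergence in \eqref{lim1} were only pointwise in $h$, the threshold $N_2$ could depend on $h=t/n$ and hence on $n$, and the two indices could no longer be forced below the single bound $N$ demanded by the definition of convergence of a double sequence. Making this dependence explicit in the write-up is the main (and only) subtlety.
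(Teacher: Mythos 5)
Your argument is correct and is exactly the standard double-limit argument the paper has in mind: the paper states Lemma~\ref{lem:limits} without proof (calling it a ``well-known special case''), and your triangle-inequality decomposition through $V(t/n)x$, with the uniformity in $h$ from \eqref{lim1} used to make the threshold in $m$ independent of $n$, is precisely the intended justification. Nothing is missing.
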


\noindent In order to prove the convergence of operator splitting in this case, we state a modified version of Chernoff's Theorem, which is applicable for approximate semigroups as well. The version we present here is a slight modification of Pazy \cite[Theorem. 6.7]{Pazy}, which is modified to fit our setting.
\begin{thm}[Modified Chernoff's Theorem]
Consider  a sequence of functions $F_m:\mathbb{R}^+\rightarrow\mathcal{L}(X_m)$, $m\in\mathbb{N}$, satisfying
\begin{equation}
F_m(0)=I_m \qquad \mbox{for all } m\in\mathbb{N},
\label{chernoff2-1}
\end{equation}
and that there exists a constant $M\ge 1$ such that
\begin{equation}
\|[F_m(t)]^k\|_{\mathcal{L}(X_m)}\le M \qquad \mbox{for all } t\ge 0,\ m,k\in\mathbb{N}.
\label{chernoff2-2}
\end{equation}
Assume further that
\begin{equation*}
\exists \lim\limits_{m\rightarrow\infty}\dfrac{J_mF_m(t/n)P_mx-J_m P_m x}{t/n}
\end{equation*}
uniformly in $n\in\mathbb{N}$ and that
\begin{equation}
Gx:= \lim\limits_{n\rightarrow\infty}\lim\limits_{m\rightarrow\infty}\dfrac{J_mF_m(t/n)P_mx-J_m P_m x}{t/n}
\label{chernoff2-3}
\end{equation}
exists for all $x\in D\subset X$ and an arbitrary $t>0$, where $D$ and $(\lambda_0-G)D$ are dense subspaces in $X$ for $\lambda_0>0$. Then the closure $\overline G$ of $G$ generates a bounded strongly continuous semigroup $\left( U(t) \right)_{t\ge 0}$, which is given by
\begin{equation}
U(t)x=\lim\limits_{n,m\rightarrow\infty} J_m[F_m(t/n)]^n P_m x
\label{convergence2}
\end{equation}
for all $x\in X$ uniformly for $t$ in compact intervals.
\label{thm:chernoff2}
\end{thm}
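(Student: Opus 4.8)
The plan is to follow the classical proof of Chernoff's product formula (Pazy~\cite[Theorem~6.7]{Pazy}), inserting the bridge $J_m,P_m$ so that letting $m\to\infty$ reduces the assertion to the classical situation on $X$ and letting $n\to\infty$ finishes it. The linchpin is property~(i) of Definition~\ref{def:proj}, $P_mJ_m=I_m$, giving the identity $(J_mF_m(h)P_m)^k=J_mF_m(h)^kP_m$; together with \eqref{chernoff2-2} and the bounds $\|J_m\|\le M_J$, $\|P_m\|\le M_P$ it transfers the stability to $X$, so that $\|(J_mF_m(h)P_m)^k\|\le MM_JM_P=:\widehat M$ uniformly in $h,k,m$. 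Multiplying the quotient in \eqref{chernoff2-3} by $t/n$ and using property~(ii), $J_mP_mx\to x$, shows that $\widetilde F(h)x:=\lim_{m\to\infty}J_mF_m(h)P_mx$ exists for every $x\in D$ and every $h\in(0,t_0]$; by the bound $\widehat M$ and density it extends to $\widetilde F(h)\in\mathcal L(X)$ with $\|\widetilde F(h)^k\|\le\widehat M$, and $\widetilde F(0)=I$.

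First I would produce the limit semigroup on $X$. The inner limit in \eqref{chernoff2-3} rewrites as $\tfrac1h(\widetilde F(h)x-x)=\lim_m\tfrac{1}{t/n}(J_mF_m(t/n)P_mx-J_mP_mx)$, and the outer limit ($h=t/n\downarrow0$) gives $\lim_{h\downarrow0}\tfrac1h(\widetilde F(h)x-x)=Gx$ for $x\in D$. Since $D$ and $(\lambda_0-G)D$ are dense, the classical Chernoff product formula applied to $\widetilde F$ shows that $\overline G$ generates a bounded strongly continuous semigroup $U$ and that $\widetilde F(t/n)^nx\to U(t)x$ as $n\to\infty$, uniformly for $t$ in compact intervals. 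Moreover, by the power identity and the uniform bound, for each fixed $n$ one has $J_mF_m(t/n)^nP_mx=(J_mF_m(t/n)P_m)^nx\to\widetilde F(t/n)^nx$ as $m\to\infty$.

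The substance of the proof is to upgrade these two iterated limits to the genuine two-index limit \eqref{convergence2}. Put $H_{n,m}:=\tfrac1{t/n}(F_m(t/n)-I_m)\in\mathcal L(X_m)$; since $\|F_m(t/n)^k\|\le M$, a subordination estimate gives $\|e^{sH_{n,m}}\|\le M$ for all $s\ge0$, and the Chernoff estimate $\|C^ny-e^{n(C-I)}y\|\le M\sqrt n\,\|(C-I)y\|$ with $C=F_m(t/n)$ and $y=P_mx$, followed by $\|H_{n,m}P_mx\|=\|P_mJ_mH_{n,m}P_mx\|\le M_P\|J_mH_{n,m}P_mx\|$, yields
\begin{equation*}
\big\|J_mF_m(t/n)^nP_mx-J_me^{tH_{n,m}}P_mx\big\|\le\frac{MM_JM_P\,t}{\sqrt n}\,\sup_{n,m}\|J_mH_{n,m}P_mx\|.
\end{equation*}
For $x\in D$ the supremum is finite because the limit in \eqref{chernoff2-3} is uniform in $n$; hence this remainder tends to $0$ as $n\to\infty$ uniformly in $m$. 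It then remains to show $J_me^{tH_{n,m}}P_mx\to U(t)x$ as $n,m\to\infty$, which I would obtain by a Trotter--Kato comparison in the spirit of Corollary~\ref{cor:conv}: for $x\in D\subset D(\overline G)$,
\begin{equation*}
J_me^{tH_{n,m}}P_mx-J_mP_mU(t)x=\int_0^tJ_me^{(t-s)H_{n,m}}P_m\big(J_mH_{n,m}P_m-\overline G\big)U(s)x\,ds,
\end{equation*}
where $P_mJ_m=I_m$ was used inside the integrand. Bounding $\|J_me^{(t-s)H_{n,m}}P_m\|\le\widehat M$ and using that $J_mH_{n,m}P_m\to\overline G$ along the fixed, $(n,m)$-independent orbit $U(s)x$ drives the integral to $0$; adding $(J_mP_m-I)U(t)x\to0$ and the $1/\sqrt n$ remainder, and finally extending from $D$ to all of $X$ by the bound $\widehat M$, gives \eqref{convergence2} with the stated uniformity.

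The hard part will be the final interchange of $n,m\to\infty$, and it hinges on two separate uniformities that must be matched carefully: the $1/\sqrt n$ remainder is uniform in $m$ (supplied by the power bound \eqref{chernoff2-2} and the identity $P_mJ_m=I_m$), whereas the Trotter--Kato consistency $J_mH_{n,m}P_m\to\overline G$ must be controlled uniformly in $n$, which is exactly why \eqref{chernoff2-3} is assumed to hold uniformly in $n$; Lemma~\ref{lem:limits} is the device that converts this pair of one-sided uniform limits into the simultaneous limit. The genuinely delicate point is the consistency along the limit orbit $\{U(s)x:s\in[0,t]\}$: this set is compact and $(n,m)$-independent, but it need not lie in the core $D$ on which the double-limit consistency is given directly, so passing from $D$ to the orbit --- e.g.\ by approximating $U(s)x$ within $D$ and exploiting the uniform bounds, or by recasting the comparison through resolvents --- is the step requiring the most care.
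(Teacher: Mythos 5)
Your proposal is correct in outline and follows essentially the same route as the paper: both arguments replace $[F_m(t/n)]^n$ by the exponential $e^{tH_{n,m}}$ of $H_{n,m}=\tfrac{n}{t}\big(F_m(t/n)-I_m\big)$, bound $\|e^{sH_{n,m}}\|\le M$ by subordination to \eqref{chernoff2-2}, control the discrepancy via the Chernoff estimate $\|C^ny-e^{n(C-I)}y\|\le M\sqrt n\,\|(C-I)y\|$ (which is $O(1/\sqrt n)$ uniformly in $m$ precisely because the difference quotient in \eqref{chernoff2-3} converges, hence is bounded, uniformly in $n$), and then reduce \eqref{convergence2} to showing $J_me^{tH_{n,m}}P_mx\to U(t)x$. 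The one place you diverge is this last step: the paper simply invokes the Trotter--Kato approximation theorem in the approximating-spaces form of Ito and Kappel \cite[Theorem 4.2]{Ito-Kappel} together with \cite[Theorem III.4.8]{Engel-Nagel}, applied first in $m$ (uniformly in $n$) and then in $n$, which delivers in one stroke both the generation of $U$ by $\overline G$ and the convergence of the exponentials; you instead attempt to prove that convergence by hand via a Duhamel comparison, and you correctly identify where the direct route stalls --- the integrand requires consistency of $J_mH_{n,m}P_m-\overline G$ along the orbit $\{U(s)x\}$, which lies in $D(\overline G)$ but not in the core $D$, and since the operators $H_{n,m}$ are not uniformly bounded this cannot be closed by naive approximation from $D$. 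The resolvent recasting you propose as the repair is exactly the content of the quoted Trotter--Kato theorem, so the plan is sound but you are re-deriving a cited result rather than using it. Your preliminary construction of $\widetilde F(h)$ on $X$ and the appeal to the classical Chernoff formula is a harmless detour the paper does not need, since generation of $U$ by $\overline G$ already falls out of the Trotter--Kato step.
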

\begin{proof}
For $h>0$ define
\begin{equation}
G_{h,n,m}:=\dfrac{F_m(h/n)-I_m}{h/n}\in\mathcal{L}(X_m)
\nonumber
\end{equation}
for all $n,m\in\mathbb{N}$. Observe that $\lim\limits_{n,m\to\infty}J_mG_{h,n,m}P_mx=Gx$ for all $x\in D$. Every semigroup $(e^{tG_{h,n,m}})_{t\ge 0}$ satisfies
\begin{equation}
\left\|e^{tG_{h,n,m}}\right\| \le e^{-tn/h}\left\|e^{tnF_m(h/n)/h}\right\| \le e^{-tn/h}\sum\limits_{k=0}^\infty\dfrac{t^kn^k}{h^kk!}\|[F_m(h/n)]^k\| \le M
\label{mod-chernoff-M}
\end{equation}
for every $t\ge 0$. This shows that the assumptions of the Trotter\,--\,Kato Theorem (see Ito and Kappel \cite[Theorem. 4.2]{Ito-Kappel} together with Engel and Nagel \cite[Theorem III.4.8]{Engel-Nagel}) are fulfilled, and we can apply it first taking limit in $m\to\infty$ (which is uniform in $n\in\mathbb{N}$), and then taking limit as $n\to\infty$. Hence, the closure $\overline G$ of $G$ generates a strongly continuous semigroup $U$ given by
\begin{equation}
\lim\limits_{n,m\to\infty}\|U(t)x-J_me^{tG_{t,n,m}}P_mx\|=0 \qquad \mbox{for all } x\in X \label{tri1}
\end{equation}
uniformly for $t$ in compact intervals.
On the other hand, we have by Lemma III.5.1. in Engel and Nagel \cite{Engel-Nagel}:
\begin{align}
\nonumber & \left\|J_m e^{tG_{t,n.m}}P_m x-J_m[F_m(t/n)]^n P_m x\right\| \\
\label{tri2} =& \left\|J_me^{n(F_m(t/n)-I_m)}P_mx-J_m[F_m(t/n)]^n P_m x\right\| \\
\nonumber \le & \sqrt{n}M\|J_mF_m(t/n)P_m x-J_mP_m x\| \\
\nonumber =& \dfrac{tM}{\sqrt{n}}\left\|\dfrac{J_mF_m(t/n)P_mx-J_m P_m x}{t/n}\right\| \xrightarrow{n,m\to\infty} 0
\end{align}
for all $x\in D$, and uniformly for $t$ in compact intervals. The combination of (\ref{tri1}) and (\ref{tri2}) yields
\begin{align}
\nonumber & \|U(t)x-J_m[F_m(t/n)]^n P_mx\| \\
\nonumber \le & \|U(t)x-J_me^{tG_{t,n,m}}P_mx\|+\|J_me^{tG_{t,n,m}}P_mx-J_m[F_m(t/n)]^n P_mx\| \xrightarrow{n,m\to\infty} 0
\end{align}
for all $x\in D$, and uniformly for $t$ in compact intervals. By the uniform boundedness principle the statement follows for all $x\in X$.
\end{proof}
\begin{rem}\label{cor:chernoff2v}
Theorem \ref{thm:chernoff2} remains valid  in the case when the stability conditions reads as
\begin{equation}
\|[F_m(t)]^k\|\le Me^{k\omega{t}}
\label{chernoff2-2v}
\end{equation}
for all $t\ge 0$ and $m\in\mathbb{N}$, $k\in\mathbb{N}$,
and for some constants $M\ge 1$, $\omega\in\mathbb{R}$, as we can see it from a standard rescaling procedure, see Engel and Nagel \cite[Corollary III.5.3]{Engel-Nagel}.
\end{rem}


\noindent In the remainder of this subsection, we consider the convergence of the various splitting procedures.
\begin{lem}
Let $J_m,P_m,T_m$ be operators defined in Definitions \ref{def:proj} and \ref{def:stab_conv}. Then the following holds for all $t\in [0,t_0]$:
\begin{equation}
\lim\limits_{n,m\to\infty}\dfrac{J_mT_m(t/n)P_m x-J_mP_m x}{t/n}=Ax \qquad \mbox{for all } x\in D(A),
\nonumber
\end{equation}
where the limit as $m\to\infty$ is uniform in $n\in\mathbb{N}$.
\label{lem:T_m}
\end{lem}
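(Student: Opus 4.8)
The plan is to deduce the statement from the double-limit Lemma \ref{lem:limits}, applied with $D=D(A)$, $\overline V=A$, and the families
\[
V_m(h)x:=\frac{J_mT_m(h)P_mx-J_mP_mx}{h},\qquad V(h)x:=\frac{T(h)x-x}{h}
\]
for $h\in(0,t_0]$, extended at $h=0$ by $V_m(0)x:=J_mA_mP_mx$ and $V(0)x:=Ax$. With this choice the conclusion of Lemma \ref{lem:limits} is exactly the desired identity, and the asserted uniformity of the inner limit in $n\in\mathbb N$ is weaker than, hence implied by, the uniformity in $h\in[0,t_0]$ demanded in hypothesis \eqref{lim1} (since $\{t/n:n\in\mathbb N\}\subset(0,t_0]$). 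Hypothesis \eqref{lim2}, i.e. $\lim_{h\to 0}V(h)x=Ax$, is merely the defining property of the generator $A$ of $T$ for $x\in D(A)$. Everything thus reduces to verifying \eqref{lim1}.

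The obstacle is the factor $1/h$: estimating $V_m(h)x-V(h)x$ directly through Corollary \ref{cor:conv} only controls the numerators $J_mT_m(h)P_mx-T(h)x$ and $J_mP_mx-x$, which tend to $0$ but are then divided by $h$, giving no control as $h\to 0$. To cancel the $1/h$ I would pass to integrated form. For $x\in D(A)$ the consistency condition presupposes $P_mx\in D(A_m)$ (so that $A_mP_mx$ is defined; this is automatic when $X_m$ is finite dimensional), hence
\[
T(h)x-x=\int_0^hT(s)Ax\,ds,\qquad T_m(h)P_mx-P_mx=\int_0^hT_m(s)A_mP_mx\,ds,
\]
so that $V(h)x=\tfrac1h\int_0^hT(s)Ax\,ds$ and $V_m(h)x=\tfrac1h\int_0^hJ_mT_m(s)A_mP_mx\,ds$. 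Averaging then yields, for every $h\in(0,t_0]$,
\[
\|V_m(h)x-V(h)x\|\le\sup_{s\in[0,t_0]}\bigl\|J_mT_m(s)A_mP_mx-T(s)Ax\bigr\|=:\varepsilon_m,
\]
and the same bound holds at $h=0$ since $V_m(0)x-V(0)x=J_mA_mP_mx-Ax$, which is the value at $s=0$ of the expression inside the supremum. Thus \eqref{lim1} follows once $\varepsilon_m\to 0$.

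The heart of the proof is therefore the uniform convergence $J_mT_m(s)A_mP_mx\to T(s)Ax$ on $[0,t_0]$. Here I would use $P_mJ_m=I_m$ from Definition \ref{def:proj} to write $A_mP_mx=P_mJ_mA_mP_mx=P_my_m$ with $y_m:=J_mA_mP_mx$, so that $J_mT_m(s)A_mP_mx=J_mT_m(s)P_my_m$, and then split
\[
J_mT_m(s)P_my_m-T(s)Ax=J_mT_m(s)P_m(y_m-Ax)+\bigl(J_mT_m(s)P_mAx-T(s)Ax\bigr).
\]
The second term tends to $0$ uniformly in $s\in[0,t_0]$ by Corollary \ref{cor:conv} applied to the fixed vector $Ax$. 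The first term is bounded by $M_JM_P\,M_Te^{|\omega_T|t_0}\,\|y_m-Ax\|$, using $\|J_m\|\le M_J$, $\|P_m\|\le M_P$ from Definition \ref{def:proj} and the stability bound $\|T_m(s)\|\le M_Te^{\omega_Ts}$ from Definition \ref{def:stab_conv}; it tends to $0$ since $y_m=J_mA_mP_mx\to Ax$ by the consistency condition in Definition \ref{def:stab_conv}. I expect this last splitting — pushing the varying vector $y_m$ through the uniformly bounded operators $J_mT_m(s)P_m$ while invoking consistency to replace it by the fixed vector $Ax$ — to be the only genuinely non-routine step; the rest is bookkeeping with the integral representation and the stated definitions.
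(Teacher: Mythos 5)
Your proposal is correct and follows essentially the same route as the paper's proof: both reduce the statement to Lemma \ref{lem:limits}, cancel the factor $1/h$ via the integral representation $T_m(h)P_mx-P_mx=\int_0^hT_m(s)A_mP_mx\,\mathrm{d}s$, and then use $P_mJ_m=I_m$ to split $J_mT_m(s)A_mP_mx-T(s)Ax$ into a term controlled by the consistency of $A_m$ together with the uniform bounds on $J_m,P_m,T_m(s)$, and a term controlled by Corollary \ref{cor:conv} applied to the fixed vector $Ax$. Your explicit remark that the argument presupposes $P_mx\in D(A_m)$ is a point the paper leaves implicit, but otherwise the two arguments coincide.
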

\begin{proof}
It suffices to prove that the operators
\begin{equation}
V_m(h)=\dfrac{J_mT_m(h)P_m-J_mP_m}{h}
\nonumber
\end{equation}
fulfill the conditions in Lemma \ref{lem:limits}. In order to determine the first limit \eqref{lim1}, the following norm has to be investigated for all $x\in D(A)$:
\begin{align}
\nonumber & \left\| \dfrac{J_mT_m(h)P_mx-J_mP_mx}{h}-\dfrac{T(h)x-x}{h} \right\|
= \dfrac{1}{h}\left\| \int\limits_0^h J_mA_mT_m(s)P_mx\mathrm{d}s - \int\limits_0^h AT(s)x\mathrm{d}s \right\| \\
\nonumber \le & \sup\limits_{s\in[0,t_0]} \left\| J_mA_mT_m(s)P_mx-AT(s)x \right\|
= \sup\limits_{s\in[0,t_0]} \left\| J_mT_m(s)A_mP_mx-T(s)Ax \right\| \\
\nonumber \le & \sup\limits_{s\in[0,t_0]} \left\| J_mT_m(s)P_m[J_mA_mP_mx-Ax]+[J_mT_m(s)P_m-T(s)]Ax \right\| \\
\nonumber \le & \sup\limits_{s\in[0,t_0]} \|J_m\|\|T_m(s)\|\|P_m\|\|J_mA_mP_mx-Ax\|
+\sup\limits_{s\in[0,t_0]} \|[J_mT_m(s)P_m-T(s)]Ax\|.
\end{align}
According to Definition \ref{def:stab_conv}, the term $\|J_mA_mP_mx-Ax\|$ tends to 0 as $m$ tends to infinity. Since $y:=Ax$ is a fixed element in the Banach space $X$, $\|J_mT_m(s)P_my-T(s)y\|$ tends to 0 uniformly in $h$ as $m$ tends to infinity because of Corollary \ref{cor:conv}. Operators $J_m$ and $P_m$ were assumed to be bounded. Term $\|T_m(s)\|$ was also assumed to be exponentially bounded with a constant $\omega_T$, which is independent of $m$. Therefore,
\begin{equation}
\sup\limits_{s\in[0,t_0]}\|T_m(s)\|\le \sup\limits_{s\in[0,t_0]} M_T\mathrm{e}^{\omega_T s}
\le M_T\mathrm{e}^{\max\{0,\omega_T\} t_0} = const. < \infty.
\nonumber
\end{equation}
Hence, the above difference tends to 0 uniformly in $h$, therefore, \eqref{lim1} holds. The second limit \eqref{lim2} can be obtained by using the definition of the generator:
\begin{equation}
\lim\limits_{h\to 0}\dfrac{T(h)x-x}{h}=Ax \qquad \forall x\in D(A).
\nonumber
\end{equation}
Thus, the statement is proved.
\end{proof}
\begin{cor}
The following statement can be proved similarly as Lemma \ref{lem:T_m}:
\begin{equation}
\lim\limits_{n,m\to\infty}\dfrac{J_mS_m(t/n)P_mx-J_mP_mx}{t/n}=Bx \qquad \forall x\in D(B).
\nonumber
\end{equation}
\label{cor:S_m}
\end{cor}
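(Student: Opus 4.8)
The plan is to mirror the proof of Lemma~\ref{lem:T_m} essentially verbatim, performing the symbol substitutions $T_m \rightsquigarrow S_m$, $A_m \rightsquigarrow B_m$, $T \rightsquigarrow S$, $A \rightsquigarrow B$, and $(M_T,\omega_T) \rightsquigarrow (M_S,\omega_S)$. Concretely, I would set
\[
V_m(h) := \frac{J_m S_m(h) P_m - J_m P_m}{h}, \qquad V(h)x := \frac{S(h)x - x}{h}, \qquad \overline{V}x := Bx,
\]
and verify the two hypotheses \eqref{lim1} and \eqref{lim2} of Lemma~\ref{lem:limits} on the dense subset $D := D(B)$, so that its conclusion delivers exactly the claimed double limit $\lim_{n,m\to\infty} V_m(t/n)x = Bx$ for $t\in[0,t_0]$.

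For hypothesis \eqref{lim1} I would fix $x \in D(B)$ and estimate, for $h \in (0,t_0]$, the quantity $\|V_m(h)x - V(h)x\|$ by rewriting both difference quotients through the integral identities $J_m S_m(h) P_m x - J_m P_m x = \int_0^h J_m B_m S_m(s) P_m x \,\mathrm{d}s$ and $S(h)x - x = \int_0^h B S(s) x \,\mathrm{d}s$, so that the factor $1/h$ cancels against the length of the integration interval and the norm is bounded by $\sup_{s \in [0,t_0]} \|J_m B_m S_m(s) P_m x - B S(s) x\|$. Using that each generator commutes with its own semigroup, $B_m S_m(s) = S_m(s) B_m$ and $B S(s) = S(s) B$, and inserting $P_m J_m = I_m$ from Definition~\ref{def:proj}(i), I would split the integrand as
\[
J_m S_m(s) P_m \bigl[J_m B_m P_m x - B x\bigr] + \bigl[J_m S_m(s) P_m - S(s)\bigr] B x.
\]
The first summand is controlled by $\|J_m\|\,\|S_m(s)\|\,\|P_m\|\,\|J_m B_m P_m x - B x\|$, which tends to $0$ as $m \to \infty$ by the consistency assumption Definition~\ref{def:stab_conv}(ii)(b), uniformly in $s$ because $\|S_m(s)\| \le M_S \mathrm{e}^{\max\{0,\omega_S\} t_0}$ is bounded on $[0,t_0]$ and $J_m, P_m$ are uniformly bounded by Definition~\ref{def:proj}(iii); the second summand, applied to the \emph{fixed} vector $y := Bx \in X$, tends to $0$ uniformly in $s \in [0,t_0]$ by Corollary~\ref{cor:conv}(b). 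Hypothesis \eqref{lim2} is then immediate, being nothing but the definition of $B$ as the generator of $\bigl(S(t)\bigr)_{t \ge 0}$: $\lim_{h \to 0} (S(h)x - x)/h = Bx$ for $x \in D(B)$.

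Since every step is a literal transcription of the already-verified Lemma~\ref{lem:T_m}, I expect no genuine obstacle here; the only point deserving a word of care is purely bookkeeping, namely that the consistency input is Definition~\ref{def:stab_conv}(ii)(b) and the uniform-convergence input is Corollary~\ref{cor:conv}(b) — one must consistently select the $B$-branch of every hypothesis rather than the $A$-branch used in Lemma~\ref{lem:T_m}. With \eqref{lim1} and \eqref{lim2} in hand, Lemma~\ref{lem:limits} applied to $V_m, V, \overline{V}$ on $D(B)$ yields $\lim_{n,m\to\infty} V_m(t/n)x = Bx$ for every $x \in D(B)$ and every fixed $t \in [0,t_0]$, which is precisely the assertion.
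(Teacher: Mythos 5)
Your proposal is correct and coincides with the paper's intent: the paper offers no separate proof for this corollary, stating only that it follows ``similarly as Lemma \ref{lem:T_m}'', and your verbatim transcription with the substitutions $T_m\rightsquigarrow S_m$, $A_m\rightsquigarrow B_m$, $(M_T,\omega_T)\rightsquigarrow(M_S,\omega_S)$ is exactly the argument being alluded to. All the inputs you invoke --- the integral representation of the difference quotient, the commutation of a semigroup with its generator, Definition \ref{def:stab_conv}(ii)(b), Corollary \ref{cor:conv}(b), and Lemma \ref{lem:limits} --- are the correct $B$-branch counterparts, so there is nothing to add.
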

\noindent Now we really turn our attention to the convergence of the different splitting procedures. First we show the convergence of the split solution defined in (\ref{sq-approx}), i.e., in the case when the sequential splitting is applied.
\begin{thm}
The sequential splitting (\ref{sq-approx}) is convergent at  time level $t>0$ if the stability condition holds for the approximate semigroups, and the approximate generators are consistent according to Definition \ref{def:stab_conv}.
\label{thm:sq-conv-approx}
\end{thm}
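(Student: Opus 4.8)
The plan is to verify that the operators

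$$F_m^{\mathrm{sq}}(h)=S_m(h)T_m(h)$$

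satisfy the hypotheses of the Modified Chernoff Theorem (Theorem \ref{thm:chernoff2}), and then read off the conclusion. The stability hypothesis \eqref{chernoff2-2} (in the rescaled form \eqref{chernoff2-2v} from Remark \ref{cor:chernoff2v}) is granted directly by the stability assumption \eqref{stab_approx} in Definition \ref{def:stab_conv}, and the normalization $F_m^{\mathrm{sq}}(0)=S_m(0)T_m(0)=I_m$ is immediate. So the real content is to identify the infinitesimal generator produced by the consistency condition \eqref{chernoff2-3}, namely to show that

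$$\lim_{n\to\infty}\lim_{m\to\infty}\frac{J_mS_m(t/n)T_m(t/n)P_mx-J_mP_mx}{t/n}=(A+B)x$$

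for $x$ in a suitable dense domain, with the inner $m$-limit uniform in $n$.

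First I would split the difference quotient telescopically, writing

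$$S_m(h)T_m(h)-I_m=\bigl(S_m(h)-I_m\bigr)T_m(h)+\bigl(T_m(h)-I_m\bigr),$$

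so that after sandwiching with $J_m,P_m$ the difference quotient becomes

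$$\frac{J_m\bigl(S_m(h)-I_m\bigr)T_m(h)P_mx}{h}+\frac{J_mT_m(h)P_mx-J_mP_mx}{h}.$$

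The second summand is exactly the object treated in Lemma \ref{lem:T_m}, which already gives the limit $Ax$ (double limit, $m$-limit uniform in $n$) for $x\in D(A)$. For the first summand I would insert $J_mP_m$ and use property (i) of Definition \ref{def:proj} together with Corollary \ref{cor:conv}: since $T_m(h)P_mx\to P_m(T(h)x)$ in the appropriate sense and the factor $(S_m(h)-I_m)/h$ behaves like $B$ on the limit, I expect this term to converge to $Bx$ by an argument parallel to Corollary \ref{cor:S_m}. Taking $x\in D(A)\cap D(B)$ then yields the generator $G=A+B$ on this domain.

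The main obstacle is the uniformity of the $m$-limit in $n$ for the first summand, because there the operator $S_m(h)-I_m$ is applied to the \emph{moving} vector $T_m(h)P_mx$ rather than to a fixed element, so one cannot invoke Corollary \ref{cor:conv} for a single fixed $y$ as was done in Lemma \ref{lem:T_m}. To handle this I would rewrite the term using the integral representation $S_m(h)-I_m=\int_0^h B_mS_m(s)\,ds$, exchange $B_m$ with $S_m(s)$, and estimate

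$$\Bigl\|\tfrac1h\!\int_0^h\! J_mS_m(s)B_mT_m(h)P_mx\,ds-Bx\Bigr\|$$

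by adding and subtracting the reference quantity $Bx$ and exploiting (a) the stability bounds $\|S_m(s)\|,\|T_m(s)\|\le\mathrm{const}$ on $[0,t_0]$, (b) the consistency $J_mB_mP_m\to B$ on $D(B)$, and (c) the uniform convergence $J_mT_m(h)P_my\to T(h)y=y+o(1)$ from Corollary \ref{cor:conv}, with the $o(1)$ absorbed by letting $h=t/n\to0$. Because all the approximate-semigroup norms are bounded uniformly in $m$ and $s\in[0,t_0]$, the estimates are uniform in $n$, which is precisely what the double-limit structure of the Modified Chernoff Theorem requires.

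Finally, having verified $Gx=(A+B)x$ on $D(A)\cap D(B)$, I would note that the density of $D$ and of $(\lambda_0-G)D$ needed in Theorem \ref{thm:chernoff2} follows from Assumption \ref{gen_ass_abc}, since $\overline{A+B}=\overline G$ generates the semigroup $U$ and $D(A)\cap D(B)$ is a core. The theorem then delivers

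$$U(t)x=\lim_{n,m\to\infty}J_m[S_m(t/n)T_m(t/n)]^nP_mx=\lim_{n,m\to\infty}u^{\mathrm{sq}}_{n,m}(t),$$

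which is exactly convergence at the fixed time level $t>0$ in the sense of Definition \ref{def:conv-approx}.
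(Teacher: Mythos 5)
Your overall architecture --- verify the hypotheses of the Modified Chernoff Theorem \ref{thm:chernoff2} for $F_m^{\mathrm{sq}}(h)=S_m(h)T_m(h)$, get stability from \eqref{stab_approx} via Remark \ref{cor:chernoff2v}, and reduce everything to the consistency limit \eqref{chernoff2-3} --- is exactly the paper's. The difference, and the problem, lies in your telescoping. The paper writes $S_m(h)T_m(h)-I_m=S_m(h)\bigl(T_m(h)-I_m\bigr)+\bigl(S_m(h)-I_m\bigr)$, so that the difference quotient of $T_m$ (controlled by Lemma \ref{lem:T_m}) is hit only by the uniformly bounded operators $J_mS_m(h)P_m$, while the difference quotient of $S_m$ acts on the \emph{fixed} vector $P_mx$ (Corollary \ref{cor:S_m}); the cross term is then handled by Lemma \ref{lem:limits} together with the observation that the set $\bigl\{\tfrac1h(J_mT_m(h)P_mx-J_mP_mx)\bigr\}$ is relatively compact, so that strong convergence of $J_mS_m(h)P_m$ is uniform on it. You chose the opposite order, $\bigl(S_m(h)-I_m\bigr)T_m(h)+\bigl(T_m(h)-I_m\bigr)$, which places the singular factor $(S_m(h)-I_m)/h$ on the \emph{moving} vector $T_m(h)P_mx$, and your proposed repair of this term does not close.

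Concretely: rewriting $(S_m(h)-I_m)/h$ as $\frac1h\int_0^h B_mS_m(s)\,ds$ and commuting $B_m$ past $S_m(s)$ already requires $T_m(h)P_mx\in D(B_m)$, which is not among the hypotheses; and even granting that, the consistency assumption (ii)(b) of Definition \ref{def:stab_conv} only gives $J_mB_mP_my\to By$ for each \emph{fixed} $y\in D(B)$. In your term $B_m$ acts on $T_m(h)P_mx=P_m\bigl(J_mT_m(h)P_mx\bigr)$, a vector depending on $m$ and $h$; to pass to the limit you would need a bound on $J_mB_mP_m$ uniform in $m$ on a set containing these vectors, or invariance of $D(B)$ under $T(h)$, neither of which is available --- in the typical finite-dimensional discretization each $B_m$ is bounded, but with norm blowing up as $m\to\infty$. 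The gap is avoidable: either adopt the paper's order of telescoping, or keep yours but expand $T_m(h)P_mx=P_mx+h\,w_{m,h}$ with $J_mw_{m,h}$ bounded and convergent to $Ax$, so that the first summand becomes $(S_m(h)-I_m)P_mx/h+(S_m(h)-I_m)w_{m,h}$; the first piece is exactly Corollary \ref{cor:S_m}, and the second vanishes by the same compactness and uniform-convergence argument the paper uses. Either way one is led back to the paper's compactness step rather than to the $B_m$-integral manipulation. The remaining parts of your plan (stability, the identification $G=A+B$ on $D(A)\cap D(B)$, and the density of $D$ and $(\lambda_0-G)D$ from Assumption \ref{gen_ass_abc}) are fine and agree with the paper.
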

\begin{proof}
According to Chernoff's Theorem \ref{thm:chernoff2} the sequential splitting is convergent if the stability \eqref{chernoff2-2v} and the consistency \eqref{chernoff2-3} hold for the operator
\begin{equation}
F_m(h):=S_m(h)T_m(h).
\end{equation}
The stability condition \eqref{chernoff2-2v} is fulfilled, since we assumed that \eqref{stab_approx} holds. In order to prove the consistency criterion \eqref{chernoff2-3}, the following equation should be verified:
\begin{equation}
\lim\limits_{n,m\to\infty}\dfrac{J_mF_m(t/n)P_mx-J_mP_mx}{t/n}=(A+B)x \qquad \mbox{for all } x\in D(A)\cap D(B).
\nonumber
\end{equation}
We investigate the following limit:
\begin{align}
\nonumber & \lim\limits_{n,m\to\infty}\dfrac{J_mS_m(t/n)T_m(t/n)P_mx-J_mP_mx}{t/n} \\
\nonumber =& \lim\limits_{n,m\to\infty}J_mS_m(t/n)P_m\dfrac{J_mT_m(t/n)P_mx-J_mP_mx}{t/n}\\
\nonumber +& \lim\limits_{n,m\to\infty}\dfrac{J_mS_m(t/n)P_mx-J_mP_mx}{t/n},
\end{align}
where we apply Lemma \ref{lem:limits} for each term. Choosing
\begin{equation}
V_m(h):=J_mS_m(h)P_m
\nonumber
\end{equation}
and applying Corollary \ref{cor:conv}, we obtain that
\begin{align}
\nonumber & \lim\limits_{m\to\infty}J_mS_m(h)P_mx=S(h)x \qquad \forall x\in X \mbox{ uniformly for } h\in[0,t_0] \mbox{ and} \\
\nonumber & \lim\limits_{h\to 0}S(h)x=x \qquad \mbox{for all } x\in X.
\end{align}
Notice further that the set $\left\{\frac{1}{h}(J_mT_m(h)P_mx-J_mP_mx):\,h\in[0,t_0]\right\}$ is relative compact for $x\in D(A)$, and that on compact sets the strong and the uniform convergence is equivalent.
Hence,  Lemma \ref{lem:T_m} and Corollary \ref{cor:S_m} can be applied, thus, we obtain that
\begin{align}
& \lim\limits_{n,m\to\infty}\dfrac{J_mF_m(t/n)P_mx-J_mP_mx}{t/n}\\
\nonumber &=\lim\limits_{n,m\to\infty}\dfrac{J_mS_m(t/n)T_m(t/n)P_mx-J_mP_mx}{t/n}=(IA+B)x
\nonumber
\end{align}
for all $x\in D(A)\cap D(B)$. Here $I\in\mathcal{B}(X)$ denotes the identity operator. This means that the consistency criterion of Chernoff's Theorem is fulfilled for the sequential splitting, thus, it is convergent with spatial approximations.
\end{proof}


\noindent Now we prove the convergence of the split solution defined in \eqref{st-approx}, i.e., in the case when the Strang splitting is applied.
\begin{thm}
The Strang splitting \eqref{st-approx} is convergent at time level $t>0$ if the stability condition holds for the approximate semigroups, and the approximate generators are consistent according to Definition \ref{def:stab_conv}.
\label{thm:st-conv-approx}
\end{thm}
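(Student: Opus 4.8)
The plan is to mirror the proof of Theorem \ref{thm:sq-conv-approx} and invoke the Modified Chernoff Theorem \ref{thm:chernoff2} for the operator family $F_m(h):=T_m(h/2)S_m(h)T_m(h/2)$, taking $D=D(A)\cap D(B)$ (the density of $D$ and of $(\lambda_0-G)D$ being guaranteed by Assumption \ref{gen_ass_abc}, exactly as in the sequential case). The stability hypothesis \eqref{chernoff2-2v} is not an obstacle: by the remark following Definition \ref{def:conv-approx} (the approximate analogue of Lemma \ref{lem:stab}), the assumed stability \eqref{stab_approx} of the sequential splitting already forces the stability of the Strang splitting, and a standard rescaling (Remark \ref{cor:chernoff2v}) puts it into the form \eqref{chernoff2-2v}. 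Hence all the real work lies in verifying the consistency criterion \eqref{chernoff2-3}, i.e.\ that
\[
\lim_{n,m\to\infty}\frac{J_mF_m(t/n)P_mx-J_mP_mx}{t/n}=(A+B)x \qquad \text{for all } x\in D(A)\cap D(B).
\]

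First I would telescope the sandwiched difference quotient. Writing $h=t/n$ and using $P_mJ_m=I_m$ repeatedly, one has the identity
\[
J_mF_m(h)P_m-J_mP_m = J_mT_m(h/2)S_m(h)P_m\big(J_mT_m(h/2)P_m-J_mP_m\big) + J_mT_m(h/2)P_m\big(J_mS_m(h)P_m-J_mP_m\big) + \big(J_mT_m(h/2)P_m-J_mP_m\big),
\]
which splits the quotient into three contributions. The last term, divided by $h$, equals $\tfrac12$ times the difference quotient of $T_m$ at step $h/2=t/(2n)$; reparametrising $s:=t/2$ and applying Lemma \ref{lem:T_m} it tends to $\tfrac12Ax$. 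The middle term is the prefactor $J_mT_m(h/2)P_m$ acting on $\tfrac1h\big(J_mS_m(h)P_mx-J_mP_mx\big)$, which tends to $Bx$ by Corollary \ref{cor:S_m}. The first term is the prefactor $J_mT_m(h/2)S_m(h)P_m=(J_mT_m(h/2)P_m)(J_mS_m(h)P_m)$ acting on the $T_m$-difference quotient, which tends to $\tfrac12Ax$ as above. Summing the three limits gives $\tfrac12Ax+Bx+\tfrac12Ax=(A+B)x$, as required.

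To make the two product terms rigorous I would argue exactly as in the sequential case. The prefactors converge strongly to the identity: each factor $J_mT_m(h/2)P_m$ and $J_mS_m(h)P_m$ is uniformly bounded by the stability (i)(a),(b) of Definition \ref{def:stab_conv} and converges to $T(0)=I$, respectively $S(0)=I$, by Corollary \ref{cor:conv} in the double limit $m\to\infty$, $h\to0$, so their product does too. The sets of difference quotients $\big\{\tfrac1h(J_mT_m(h/2)P_mx-J_mP_mx)\big\}$ and $\big\{\tfrac1h(J_mS_m(h)P_mx-J_mP_mx)\big\}$ are relatively compact for $x\in D(A)$, respectively $x\in D(B)$, and on compact sets strong and uniform convergence coincide; hence a prefactor converging strongly to $I$ may be interchanged with the limit of the quotient it multiplies. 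The joint behaviour of the double limit $n,m\to\infty$ in each term is governed by Lemma \ref{lem:limits}, applied factor by factor as in the proof of Lemma \ref{lem:T_m}.

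The main obstacle, heavier here than in the sequential proof because of the three factors, is precisely this passage to the limit in a product where the operator prefactor converges only strongly (not in norm) to the identity while the vector it multiplies is itself an $(n,m)$-dependent difference quotient; the relative-compactness argument quoted above is what licenses it. One must also keep careful track that the limit $m\to\infty$ is uniform in $n$ (as in Lemma \ref{lem:T_m}), so that the hypotheses of Lemma \ref{lem:limits}, and thence of Theorem \ref{thm:chernoff2}, are genuinely met, and note that the $h/2$ arguments require only the harmless reparametrisation $t/(2n)=s/n$ with $s=t/2$. Once consistency is established, Theorem \ref{thm:chernoff2} yields the claimed convergence $u^{\mathrm{St}}_{n,m}(t)\to U(t)x$ uniformly on compact time intervals.
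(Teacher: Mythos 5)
Your proposal is correct and follows essentially the same route as the paper's proof: stability of the Strang family is inherited from the sequential stability assumption via the approximate analogue of Lemma \ref{lem:stab}, and consistency is verified by exactly the same three-term telescoping of $J_mT_m(h/2)S_m(h)T_m(h/2)P_m-J_mP_m$, with Lemma \ref{lem:T_m}, Corollary \ref{cor:S_m}, Corollary \ref{cor:conv} and the relative-compactness argument handling each factor before invoking Theorem \ref{thm:chernoff2}. Your write-up is in fact somewhat more explicit than the paper's (which states the decomposition with a few typographical slips and simply cites the earlier lemmas), but there is no substantive difference.
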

\begin{proof}
As shown in Lemma \ref{lem:stab} the stability condition for Strang splitting follows from that of sequential splitting which has already been assumed in Definition \ref{def:stab_conv}. Hence, now it suffices to check whether the consistency \eqref{chernoff2-3} holds for the operator
\begin{equation}
F_m(h):=T_m(h/2)S_m(h)T_m(h/2).
\nonumber
\end{equation}
Thus, we should investigate the following limit
\begin{align}
\nonumber & \lim\limits_{n,m\to\infty}\dfrac{J_mF_m(t/n)P_mx-J_mP_mx}{t/n}\\
\nonumber =& \lim\limits_{n,m\to\infty}\dfrac{J_mT_m(t/2n)S_m(t/n)T_m(t/2n)P_mx-J_mP_m x}{t/n} \\
\nonumber =& \lim\limits_{n,m\to\infty}J_mT_m(t/2n)S_m(t/n)P_m\dfrac{J_mT_m(t/2)P_mx-J_mP_mx}{t/n} \\
\nonumber +& \lim\limits_{n,m\to\infty}J_mT_m(t/2n)P_m\dfrac{J_mS_m(t/2)P_mx-J_mP_mx}{t/n}\\
\nonumber +& \lim\limits_{n,m\to\infty}J_mP_m\dfrac{J_mT_m(t/2)P_mx-J_mP_mx}{t/n}.
\end{align}
We can apply Lemma \ref{lem:T_m} and Corollary \ref{cor:S_m} again to each term, and obtain that
\begin{align}
\nonumber & \lim\limits_{n,m\to\infty}\dfrac{J_mF_m(t/n)P_mx-J_mP_mx}{t/n}\\
\nonumber =& \lim\limits_{n,m\to\infty}\dfrac{J_mT_m(t/2n)S_m(t/n)T_m(t/2n)P_mx-J_mP_mx}{t/n}  \\
\nonumber =& I\cdot I\cdot\tfrac{1}{2}Ax+I\cdot Bx+\tfrac{1}{2}Ax =(A+B)x%
\end{align}
for all $x\in D(A)\cap D(B)$. Therefore, the Strang splitting is convergent with spatial approximations.
\end{proof}


\noindent Finally, let us prove the convergence of the split solution defined in (\ref{w-approx}), i.e., in the case when the weighted splitting is applied.
\begin{thm}
The weighted splitting (\ref{w-approx}) is convergent at time level $t>0$ if the stability condition \eqref{stab_approx_w} holds for the approximate semigroups, and the approximate generators are consistent according to Definition \ref{def:stab_conv}.
\label{thm:w-conv-approx}
\end{thm}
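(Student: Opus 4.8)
The plan is to apply the Modified Chernoff Theorem (Theorem~\ref{thm:chernoff2}, in the rescaled form of Remark~\ref{cor:chernoff2v}) to the family
\begin{equation}
F_m(h):=\Theta S_m(h)T_m(h)+(1-\Theta)T_m(h)S_m(h),
\nonumber
\end{equation}
mirroring the proofs of Theorems~\ref{thm:sq-conv-approx} and~\ref{thm:st-conv-approx}. Exactly two hypotheses must be verified: the stability estimate \eqref{chernoff2-2v} and the consistency limit \eqref{chernoff2-3}. For stability I would simply invoke the hypothesis \eqref{stab_approx_w}, which is precisely \eqref{chernoff2-2v} for this $F_m$. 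The crucial point here is that, in contrast to the sequential and Strang cases, one \emph{cannot} deduce the required bound from the sequential stability \eqref{stab_approx}; this is exactly why the present theorem assumes \eqref{stab_approx_w} directly.

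The consistency check reduces to the sequential case by linearity. Writing $J_mP_mx=\Theta J_mP_mx+(1-\Theta)J_mP_mx$, I would split the difference quotient as
\begin{align}
\nonumber \frac{J_mF_m(t/n)P_mx-J_mP_mx}{t/n}
={} & \Theta\,\frac{J_mS_m(t/n)T_m(t/n)P_mx-J_mP_mx}{t/n} \\
\nonumber {}+{} & (1-\Theta)\,\frac{J_mT_m(t/n)S_m(t/n)P_mx-J_mP_mx}{t/n}.
\end{align}
The first quotient is handled exactly as in the proof of Theorem~\ref{thm:sq-conv-approx}: applying Lemma~\ref{lem:limits} termwise together with Lemma~\ref{lem:T_m} and Corollary~\ref{cor:S_m} gives the limit $(A+B)x$ for $x\in D(A)\cap D(B)$. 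The second quotient is its reverse-order analogue, and the same argument with the roles of $S_m$ and $T_m$ interchanged (so that $J_mT_m(t/n)P_m\to I$ multiplies the quotient whose limit is $Bx$, while the remaining term contributes $Ax$) again yields $(A+B)x$. Forming the weighted sum, both contributions coincide and
\begin{align}
\nonumber \lim_{n,m\to\infty}\frac{J_mF_m(t/n)P_mx-J_mP_mx}{t/n}
&=\Theta(A+B)x+(1-\Theta)(A+B)x \\
\nonumber &=(A+B)x.
\end{align}

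Since the limit operator is again $A+B$ with closure $\overline{A+B}$, the remaining density requirements of Theorem~\ref{thm:chernoff2} are guaranteed by Assumption~\ref{gen_ass_abc}, and both Chernoff hypotheses hold, so the split solutions converge to $U(t)x$. I do not expect a genuine obstacle: once the sequential result is in place the proof is essentially bookkeeping. The only point requiring a little care is that the two difference quotients must be controlled \emph{simultaneously} in the double limit; this is secured exactly as before by the relative compactness of the sets $\{h^{-1}(J_mT_m(h)P_mx-J_mP_mx):h\in[0,t_0]\}$ and $\{h^{-1}(J_mS_m(h)P_mx-J_mP_mx):h\in[0,t_0]\}$, on which strong and uniform convergence coincide, making Lemma~\ref{lem:limits} applicable to each summand.
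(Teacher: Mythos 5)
Your proposal is correct and follows exactly the route the paper intends: the paper's own proof of this theorem consists of the single remark that one argues ``similar to the calculations presented in the proofs of Theorems~\ref{thm:sq-conv-approx} and~\ref{thm:st-conv-approx}'' for the operator $F_m(h)=\Theta S_m(h)T_m(h)+(1-\Theta)T_m(h)S_m(h)$, which is precisely what you carry out. Your write-up simply supplies the details (stability directly from \eqref{stab_approx_w}, consistency by splitting the difference quotient into the two order-reversed sequential quotients) that the paper leaves implicit.
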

\begin{proof}
Similar to the calculations presented in proofs of Theorems \ref{thm:sq-conv-approx} and \ref{thm:st-conv-approx} for the operator
\begin{equation}
F_m(h):=\Theta S_m(h)T_m(h)+(1-\Theta)T_m(h)S_m(h), \ \Theta\in(0,1).
\nonumber
\end{equation}
\end{proof}

\noindent Summarizing, we can say that by Chernoff's Theorem \ref{thm:chernoff2} the stability condition (\ref{stab_approx}) or \eqref{stab_approx_w} implies the convergence of the splitting procedures in the case of consistent approximations.


\section{Operator splittings for delay equations}

Since many physical processes depend on a former state of the system as well, they have to be described by \emph{delay differential equations} containing a term  depending on the \emph{history function} (see \cite{Batkai-Piazzera}, Kappel \cite{Kappel}). Although these differential equations cannot be written as an abstract Cauchy problem on the original state space $X$, their solutions can be obtained by an operator semigroup on an appropriate function space (called \emph{phase space}). For a systematic treatment of the problem we refer to the monograph \cite{Batkai-Piazzera}. We concentrate here on the case of bounded delay operators, while unbounded delay operators will be treated in a subsequent work.

\noindent Consider the \emph{abstract delay equation} in the following form (see, e.g., \cite{Batkai-Piazzera}):
\begin{equation}
\left\{
\begin{aligned}
\frac{\mathrm{d}u(t)}{\mathrm{d}t}&=Cu(t)+\Phi u_t, \qquad t\ge 0, \\
u(0)&=x\in X, \\
u_0&=f\in\mathrm{L}^1\big([-1,0],X\big)
\end{aligned}
\right.
\tag{DE}
\label{delay}
\end{equation}
on the Banach space $X$, where $\big(C,D(C)\big)$ is a generator of a strongly continuous semigroup on $X$, and $\Phi:\mathrm{L}^{1}\big([-1,0],X\big) \to X$ is a bounded and linear operator. The \emph{history function} $u_t$ is defined by $u_t(\sigma):=u(t+\sigma)$ for $\sigma\in[-1,0]$. \\

\noindent In order to rewrite (\ref{delay}) as an abstract Cauchy problem, we take the product space $\mathcal{E}:=X\times \mathrm{L}^1\big([-1,0],X\big)$ and the new unknown function as
\begin{equation}
t\mapsto\mathcal{U}(t):=\binom{u(t)}{u_t}\in\mathcal{E}.
\nonumber
\end{equation}
Then (\ref{delay}) can be written as an abstract Cauchy problem on the space $\mathcal{E}$ in the following way:
\begin{equation}
\left\{
\begin{aligned}
\frac{\mathrm{d}\mathcal{U}(t)}{\mathrm{d}t}&=\mathcal{G}\mathcal{U}(t), \qquad t\ge 0, \\
\mathcal{U}(0)&=\tbinom{x}{f}\in\mathcal{E},
\end{aligned}
\right.
\tag{$\mathcal{ACP}$}
\label{acp_delay}
\end{equation}
where the operator $\big(\mathcal{G},D(\mathcal{G})\big)$ is given by the matrix
\begin{equation}
\mathcal{G}:=\left(\begin{array}{cc} C & \Phi \\ 0 & \frac{d}{d\sigma} \end{array}\right)
\end{equation}
on the domain
\begin{equation}
D(\mathcal{G}):=\left\{\tbinom{\xi}{\eta}\in D(C)\times \mathrm{W}^{1,1}\big([-1,0],X\big): \ \eta(0)=\xi  \right\}.
\nonumber
\end{equation}
\noindent It is shown in \cite[Corollary 3.5, Proposition 3.9]{Batkai-Piazzera} that the delay equation (\ref{delay}) and the abstract Cauchy problem (\ref{acp_delay}) are equivalent, i.e., they have the same solutions. More precisely, the first coordinate of the solution of (\ref{acp_delay}) always solves (\ref{delay}). Due to this equivalence, the delay equation is well-posed if and only if the operator $\big(\mathcal{G},D(\mathcal{G})\big)$ generates a strongly continuous semigroup on the space $\mathcal{E}$. \\

\noindent The following case was partly investigated in the paper \cite{Csomos-Nickel}.
\begin{ass}
\mbox{}
\renewcommand{\labelenumi}{(\alph{enumi})}
\begin{enumerate}
\item The operator $\big(C,D(C)\big)$ generates a strongly continuous contraction semigroup $\big(V(t)\big)_{t\ge 0}$ on $X$.
\item The delay operator $\Phi:\ \mathrm{L}^1\big([-1,0],X\big) \to X$ is bounded.
\end{enumerate}
\label{ass1}
\end{ass}
\noindent Since the delay operator $\Phi$ is bounded, the delay equation (\ref{delay}) is well-posed by \cite[Theorem. 3.26]{Batkai-Piazzera}. In order to apply an operator splitting procedure, we split the operator in (\ref{acp_delay}) as
\begin{equation}
\mathcal{G}=\mathcal{A}+\mathcal{B},
\nonumber
\end{equation}
where the sub-operators have the forms
\begin{equation}
\begin{array}{ll}
\mathcal{A}:=\left(\begin{array}{cc} C & 0 \\ 0 & \frac{\mathrm{d}}{\mathrm{d}\sigma} \end{array}\right), & \quad D(\mathcal{A}):=D(\mathcal{G}), \medskip \\
\mathcal{B}:=\left(\begin{array}{cc} 0 & \Phi \\ 0 & 0 \end{array}\right), & \quad D(\mathcal{B}):=\mathcal{E}.
\end{array}
\label{matrices-delay}
\end{equation}
Since $C$ is a generator and $\Phi$ is bounded, the operators $\mathcal{A}$ and $\mathcal{B}$ generate the strongly continuous semigroups $\big(\mathcal{T}(t)\big)_{t\ge 0}$ and $\big(\mathcal{S}(t)\big)_{t\ge 0}$, respectively. It is shown in \cite[Theorem 3.25]{Batkai-Piazzera} that $\mathcal{T}$ is given by
\begin{equation}
\mathcal{T}(t):=\left(\begin{array}{cc} V(t) & 0 \\ V_t & T_0(t) \end{array}\right),
\nonumber
\end{equation}
where $\big(T_0(t)\big)_{t\ge 0}$ is the left shift semigroup defined by
\begin{equation}
[T_0(t)f](\sigma):=
\begin{cases}
f(t+\sigma), & \quad\mbox{if}\quad \sigma\in[-1,-t), \\
0, & \quad\mbox{if}\quad \sigma\in[-t,0],
\end{cases}
\nonumber
\end{equation}
for all $f\in\mathrm{L}^p\big([-1,0],X\big)$, and $V_t$ is
\begin{equation}
(V_t x)(\sigma):=
\begin{cases}
V(t+\sigma)x, & \quad\mbox{if}\quad \sigma\in[-t,0], \\
0, & \quad\mbox{if}\quad \sigma\in[-1,-t).
\end{cases}
\nonumber
\end{equation}
for all $x\in X$. Since $\Phi$ is a bounded operator, $\mathcal{B}$ is also bounded on $\mathcal{E}$. Therefore, the semigroup $\mathcal{S}$ generated by $\mathcal{B}$ is
\begin{equation}
\mathcal{S}(t):=e^{t\mathcal{B}}=\mathcal{I}+t\mathcal{B}=\left(\begin{array}{cc} I & t\Phi \\ 0 & \widetilde I \end{array}\right),
\nonumber
\end{equation}
where $I$, $\widetilde I$, and $\mathcal{I}$ denote the identity operators on $X$, $\mathrm{L}^1\big([-1,0],X\big)$, and $\mathcal{E}$, respectively.

\noindent By formulae (\ref{spl_sq}), (\ref{spl_st}), and (\ref{spl_w}) of the sequential, Strang, and weighted splittings, the split solutions of the delay equation with initial value $\tbinom{x}{f}\in\mathcal{E}_p$ can be written as
\begin{align}
\label{sq-delay} \mathcal{U}^\mathrm{sq}_n(t)&= [\mathcal{S}(t/n)\mathcal{T}(t/n)]^n\tbinom{x}{f}, \medskip \\
\label{st-delay} \mathcal{U}^\mathrm{St}_n(t)&= [\mathcal{T}(t/2n)\mathcal{S}(t/n)\mathcal{T}(t/2n)]^n\tbinom{x}{f}, \medskip \\
\label{w-delay} \mathcal{U}^\mathrm{w}_n(t)&= [\Theta\mathcal{S}(t/n)\mathcal{T}(t/n)+(1-\Theta)\mathcal{T}(t/n)\mathcal{S}(t/n)]^n\tbinom{x}{f}
\end{align}
for $n\in\mathbb{N}$ fixed and $\Theta\in(0,1)$, for the sequential, Strang, and weighted splittings, respectively.
\begin{thm}[Theorem. 4.2, Corollary 4.3 in \cite{Csomos-Nickel}]
Under the Assumptions \ref{ass1}, the sequential, Strang, and weighted splittings applied to the delay equation (\ref{delay}) with sub-operators (\ref{matrices-delay}) are convergent at a fixed time level $t\ge 0$.
\label{thm:spl-for-delay}
\end{thm}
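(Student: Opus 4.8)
The plan is to recognize Theorem \ref{thm:spl-for-delay} as a direct instance of Corollary \ref{prop:spl-conv-if-stab}, applied on the phase space $\mathcal{E}=X\times\mathrm{L}^1([-1,0],X)$ with the operators $\mathcal{A},\mathcal{B}$ from \eqref{matrices-delay} and their semigroups $\mathcal{T},\mathcal{S}$ playing the roles of $A,B,T,S$. Thus only two things must be checked: that the triple $(\mathcal{A},\mathcal{B},\mathcal{G})$ satisfies Assumption \ref{gen_ass_abc}, and that the stability conditions \eqref{stab_0} and \eqref{stab_weighted} hold. Everything then follows from the already-proven abstract result.

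First I would verify Assumption \ref{gen_ass_abc}. The operators $\mathcal{A}$ and $\mathcal{B}$ generate $\mathcal{T}$ and $\mathcal{S}$, as recalled above. Since $\Phi$ is bounded (Assumption \ref{ass1}(b)), $\mathcal{B}$ is a bounded operator on $\mathcal{E}$, so $\mathcal{A}+\mathcal{B}=\mathcal{G}$ is already closed on $D(\mathcal{A})=D(\mathcal{G})$ and generates a strongly continuous semigroup by the bounded perturbation theorem (see Engel and Nagel \cite{Engel-Nagel}). Consequently $\overline{\mathcal{A}+\mathcal{B}}=\mathcal{G}$ and $D(\mathcal{A})\cap D(\mathcal{B})=D(\mathcal{G})=D(\overline{\mathcal{A}+\mathcal{B}})$, so Assumption \ref{gen_ass_abc} is met and the consistency half of Chernoff's Theorem, which is built into the proof of Corollary \ref{prop:spl-conv-if-stab}, comes for free.

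The decisive step is stability, and I would obtain it by showing that \emph{both} $\mathcal{T}$ and $\mathcal{S}$ are quasi-contractions on $\mathcal{E}$ (equipped with its natural sum norm), after which Remark \ref{rem:stab_cond} yields all the required stability conditions at once — including the more delicate weighted condition \eqref{stab_weighted}, which does not follow from \eqref{stab_0} in general. For $\mathcal{S}$ this is immediate from the given formula $\mathcal{S}(t)=\mathcal{I}+t\mathcal{B}$, whence $\|\mathcal{S}(t)\|\le 1+t\|\mathcal{B}\|\le\mathrm{e}^{\|\mathcal{B}\|t}$. For $\mathcal{T}$ I would estimate the block structure entrywise using that $V$ is a contraction (Assumption \ref{ass1}(a)): for $\binom{x}{f}\in\mathcal{E}$ one has $\|V(t)x\|\le\|x\|$, $\|T_0(t)f\|_{\mathrm{L}^1}\le\|f\|_{\mathrm{L}^1}$, and $\|V_t x\|_{\mathrm{L}^1}\le\min\{t,1\}\,\|x\|$, giving $\|\mathcal{T}(t)\|\le 1+\min\{t,1\}\le\mathrm{e}^{t}$, so $\mathcal{T}$ is a quasi-contraction as well.

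With both factors quasi-contractive, the stability bounds are elementary: $\|[\mathcal{S}(t/n)\mathcal{T}(t/n)]^n\|\le(\mathrm{e}^{\|\mathcal{B}\|t/n}\mathrm{e}^{t/n})^n=\mathrm{e}^{(\|\mathcal{B}\|+1)t}$ yields \eqref{stab_0}, and the analogous convex-combination estimate (valid since the norm of a convex combination is bounded by the convex combination of the norms) yields \eqref{stab_weighted} for every $\Theta\in[0,1]$. Corollary \ref{prop:spl-conv-if-stab} then delivers convergence of the sequential, Strang, and weighted splittings simultaneously. I expect the only genuine work to be the block-matrix norm estimate establishing quasi-contractivity of $\mathcal{T}$, and in particular the $\mathrm{L}^1$ bound on the coupling term $V_t$; every other ingredient is either a citation or an immediate consequence of the boundedness of $\Phi$.
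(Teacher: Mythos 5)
Your proposal is correct and follows essentially the same route as the paper: reduce to Corollary \ref{prop:spl-conv-if-stab} and verify the stability conditions \eqref{stab_0} and \eqref{stab_weighted}, with the weighted case handled via quasi-contractivity of both $\mathcal{T}$ and $\mathcal{S}$ (i.e.\ $M=1$, as in Remark \ref{rem:stab_cond}). The paper simply outsources the stability estimates to \cite{Csomos-Nickel}, whereas you reconstruct them explicitly (including the bound $\|V_t x\|_{\mathrm{L}^1}\le\min\{t,1\}\|x\|$, which matches the cited $\|\mathcal{T}(t)\|\le 1+t$), so the content is the same.
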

\noindent By Proposition \ref{prop:spl-conv-if-stab} we only have to show that the stability condition (\ref{stab_0}) is fulfilled. For the proof, see \cite{Csomos-Nickel}. (We note that the stability of the weighted splitting follows from the estimates in \cite{Csomos-Nickel} because we can choose $M=1$ for both semigroups, see Remark \ref{rem:stab_cond}.)


\noindent In Theorem \ref{thm:spl-for-delay} we showed that the sequential, Strang, and weighted splitting procedures are convergent when they are applied to the abstract Cauchy problem (\ref{acp_delay}) associated to the delay equation (\ref{delay}). We now combine these results with spatial approximations. Similarly as in Section \ref{sc:approx}, we define the following spaces and operators.
\begin{defn}
For $m\in\mathbb{N}$ we take
\renewcommand{\labelenumi}{(\roman{enumi})}
\begin{enumerate}
\item $X_m$ Banach spaces,
\item $P_m$ and $J_m$ operators satisfying the conditions in Definition \ref{def:proj} for the Banach spaces $X$ and $X_m$,
\item $\widetilde P_m$ and $\widetilde J_m$ operators between the Banach function spaces $\mathrm{L}^1\big([-1,0],X\big)$ and $\mathrm{L}^1\big([-1,0],X_m\big)$ defined as
\begin{equation}
(\widetilde P_mf)(\sigma):=P_mf(\sigma) \quad \mbox{and} \quad (\widetilde J_mf_m)(\sigma):=J_mf_m(\sigma)
\nonumber
\end{equation}
for all $f\in\mathrm{L}^1\big([-1,0],X\big)$ and $f_m\in\mathrm{L}^1\big([-1,0],X_m\big)$,
\item the spaces $\mathcal{E}:=X\times\mathrm{L}^1\big([-1,0],X\big)$ and $\mathcal{E}_m:=X\times\mathrm{L}^1\big([-1,0],X_m\big)$, and
\item the operators
\begin{equation}
\mathcal{P}_m:=\left(\begin{array}{cc} P_m & 0 \\ 0 & \widetilde P_m \end{array}\right) \quad \mbox{in } \mathcal{E} \quad \mbox{and} \quad
\mathcal{J}_m:=\left(\begin{array}{cc} J_m & 0 \\ 0 & \widetilde J_m \end{array}\right) \quad \mbox{in } \mathcal{E}_m.
\nonumber
\end{equation}
\end{enumerate}
\label{def:projekciok}
\end{defn}
\begin{prop}
The operators $\mathcal{P}_m,\mathcal{J}_m$ satisfy the approximating properties defined in Definition \ref{def:proj}.
\end{prop}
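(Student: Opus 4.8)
The plan is to verify the three conditions of Definition \ref{def:proj} for the block-diagonal operators $\mathcal{P}_m,\mathcal{J}_m$ of Definition \ref{def:projekciok}, reducing each to the corresponding property already known for $P_m,J_m$ together with an elementary argument for the lifted operators $\widetilde P_m,\widetilde J_m$ on the Bochner spaces. Since
\[
\mathcal{P}_m\mathcal{J}_m=\begin{pmatrix} P_mJ_m & 0 \\ 0 & \widetilde P_m\widetilde J_m\end{pmatrix},
\]
condition (i) follows once we observe that $(\widetilde P_m\widetilde J_mf_m)(\sigma)=P_mJ_mf_m(\sigma)=f_m(\sigma)$ for almost every $\sigma$, hence $\widetilde P_m\widetilde J_m$ is the identity on $\mathrm{L}^1([-1,0],X_m)$; combined with $P_mJ_m=I_m$ this gives that $\mathcal{P}_m\mathcal{J}_m$ is the identity operator on $\mathcal{E}_m$.

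For the uniform-boundedness condition (iii) I would use that $\mathcal{P}_m$ acts diagonally, so for $\binom{x}{f}\in\mathcal{E}$ the norm $\|\mathcal{P}_m\binom{x}{f}\|$ splits into a contribution $\|P_mx\|\le M_P\|x\|$ and a contribution
\[
\|\widetilde P_mf\|_{\mathrm{L}^1}=\int_{-1}^0\|P_mf(\sigma)\|\,\mathrm{d}\sigma\le M_P\int_{-1}^0\|f(\sigma)\|\,\mathrm{d}\sigma=M_P\|f\|_{\mathrm{L}^1},
\]
so that $\|\widetilde P_m\|\le M_P$ and therefore $\|\mathcal{P}_m\|\le M_P$ uniformly in $m$ (independently of whether $\mathcal{E}$ carries the sum or the maximum product norm). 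The same computation with $J_m$ and $M_J$ gives $\|\widetilde J_m\|\le M_J$ and $\|\mathcal{J}_m\|\le M_J$; thus $\widetilde P_m,\widetilde J_m$ inherit the bounds of $P_m,J_m$.

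The only genuinely non-algebraic step, and the one I expect to require the most care, is the strong convergence (ii): I must show $\mathcal{J}_m\mathcal{P}_m\binom{x}{f}\to\binom{x}{f}$ in $\mathcal{E}$, i.e. $J_mP_mx\to x$ in $X$ (which is precisely condition (ii) for $P_m,J_m$) together with $\widetilde J_m\widetilde P_mf\to f$ in $\mathrm{L}^1([-1,0],X)$. For the function-space coordinate I would argue by dominated convergence: $(\widetilde J_m\widetilde P_mf)(\sigma)=J_mP_mf(\sigma)\to f(\sigma)$ for almost every $\sigma$ by applying Definition \ref{def:proj}(ii) pointwise to $f(\sigma)\in X$, while the integrands admit the integrable, $m$-independent majorant
\[
\|J_mP_mf(\sigma)-f(\sigma)\|\le(M_JM_P+1)\,\|f(\sigma)\|,
\]
coming from the bounds in (iii). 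Hence $\int_{-1}^0\|J_mP_mf(\sigma)-f(\sigma)\|\,\mathrm{d}\sigma\to0$, which is convergence in the $\mathrm{L}^1$-coordinate; combining the two coordinates yields (ii). The remaining work is routine bookkeeping of the product-space norm.
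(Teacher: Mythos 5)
Your proof is correct and follows essentially the same route as the paper's: verify the three conditions of Definition \ref{def:proj} coordinate-wise, reducing everything to the known properties of $P_m,J_m$ and handling the $\mathrm{L}^1$-coordinate by applying them pointwise in $\sigma$ plus dominated convergence. If anything, your version is slightly more careful than the paper's: you supply the genuine pointwise integrable majorant $(M_JM_P+1)\|f(\sigma)\|$ that Lebesgue's theorem actually requires (the paper only records a bound on the $\mathrm{L}^1$-norms of $\widetilde J_m\widetilde P_mf-f$), and your estimate $\|\widetilde P_mf\|_{\mathrm{L}^1}\le M_P\|f\|_{\mathrm{L}^1}$ via the integral is the correct Bochner-norm bound where the paper writes a supremum.
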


\begin{proof}
\begin{itemize}
\item $\|\mathcal{P}_m\|\le M_J$, since:
\begin{align}
\nonumber \|\mathcal{P}_m\|&=\max\limits_{\|x\|\le 1,\|f\|\le 1}\{\|P_mx\|,\|\widetilde P_mf\|\}
\le\max\limits_{\|x\|\le 1,\|f\|\le 1}\{\|P_mx\|,\sup\limits_{\sigma\in[-1,0]}\|P_mf(s)\|\} \\
\nonumber &\le\max\limits_{\|x\|\le 1,\|f\|\le 1} \{\|P_m\|\|x\|,\|P_m\|\sup\limits_{\sigma\in[-1,0]}\|f(s)\|\}
\le\max\{\|P_m\|,\|P_m\|\} \\
\nonumber &=\|P_m\|\le M_P,
\end{align}
and $\|\mathcal{J}_m\|\le M_J$ similarly.
\item $\mathcal{P}_m\mathcal{J}_m=\mathcal{I}_m$, where $\mathcal{I}_m$ denotes the identity operator on $\mathcal{E}_m$, since
\begin{align}
\nonumber \mathcal{P}_m\mathcal{J}_m
&=\left(\begin{array}{cc} P_m & 0 \\ 0 & \widetilde P_m \end{array}\right)
\left(\begin{array}{cc} J_m & 0 \\ 0 & \widetilde J_m \end{array}\right)
=\left(\begin{array}{cc} P_mJ_m & 0 \\ 0 & \widetilde P_m\widetilde J_m \end{array}\right),
\nonumber
\end{align}
where $P_mJ_m=I_m$ and $(\widetilde P_m\widetilde J_mf_m)(s)=P_mJ_mf_m(s)=f_m(s)$, hence $\widetilde P_m\widetilde J_m=\widetilde I_m$ in $\mathrm{L}^1\big([-1,0],X_m\big)$.
\item $\lim\limits_{m\to\infty}\mathcal{J}_m\mathcal{P}_m\tbinom{x}{f}=\tbinom{x}{f}$, because
\begin{equation}
\nonumber \left\| \binom{J_mP_mx-x}{\widetilde J_m\widetilde P_mf-f} \right\|
= \max\left\{ \|J_mP_mx-x\|,\|\widetilde J_m\widetilde P_mf-f\|_{\mathrm{L}^1} \right\},
\end{equation}
where $J_mP_mx$ converges to $x$ because of Definition \ref{def:proj}, and
\begin{align*}
&\|\widetilde J_m\widetilde P_mf-f\|_{\mathrm{L}^1}=\\
=&\int\limits_{-1}^0 \|(\widetilde J_m\widetilde P_m f-f)(\sigma)\|\mathrm{d}\sigma
=\int\limits_{-1}^0 \|J_mP_m f(\sigma)-f(\sigma)\|\mathrm{d}\sigma
\xrightarrow{m\to\infty}0,
\nonumber
\end{align*}
by Lebesgue's Theorem since it converges to zero for all $\sigma\in[-1,0]$ and $\widetilde J_m\widetilde P_m f-f$ is bounded:
\begin{align}
\nonumber & \|\widetilde J_m\widetilde P_m f-f\|_{\mathrm{L}^1}
\le \|\widetilde J_m\widetilde P_m f\|_{\mathrm{L}^1}+\|f\|_{\mathrm{L}^1}
\le \int\limits_{-1}^0 \|(\widetilde J_m\widetilde P_m f)(\sigma)\|\mathrm{d}\sigma +\|f\|_{\mathrm{L}^1} \\
\nonumber =& \int\limits_{-1}^0 \|J_mP_m f(\sigma)\|\mathrm{d}\sigma +\|f\|_{\mathrm{L}^1}
\le \|J_m\|\|P_m\|\int\limits_{-1}^0 \|f(\sigma)\|\mathrm{d}\sigma +\|f\|_{\mathrm{L}^1}  \\
\nonumber \le & M_JM_P\|f\|_{\mathrm{L}^1} +\|f\|_{\mathrm{L}^1}
= (M_JM_P+1)\|f\|_{\mathrm{L}^1}.
\end{align}
\end{itemize}
\end{proof}
From now on we assume the following, using the notations and terminology of the previous section (see Assumptions \ref{ass1}).
\begin{ass}
\mbox{}
\renewcommand{\labelenumi}{(\alph{enumi})}
\begin{enumerate}
\item There exist operators $C_m:X_m\to X_m$ generating strongly continuous semigroups $V_m$ satisfying $\|V_m(t)\|\leq 1$, such that $J_mC_mP_mx\to x$ for all $x\in X$.
\item There are bounded operators $\Phi_m: \mathrm{L}^1\big([-1,0],X_m\big) \to X_m$ satisfying $\|\Phi_m\|\leq c\|\Phi\|$, such that $\widetilde J_m \Phi_m \widetilde P_m f\to \Phi f$ for every $f\in \mathrm{L}^1\big([-1,0],X\big)$.
\end{enumerate}
\label{ass_approx_delay}
\end{ass}

\begin{rem}
A typical and usual choice for the operators $\Phi_m$ would be the one obtained through the spatial discretization as $\Phi_m f_m:=\widetilde P_m \Phi \widetilde J_m f_m$.
\end{rem}

We are now in the position to define the approximating operators for the delay equation. Consider the operators

\begin{equation}
\mathcal{A}_m:=\left(\begin{array}{cc} C_m & 0 \\ 0 & \frac{d}{d\sigma} \end{array}\right)
\end{equation}
on the domain
\begin{equation}
D(\mathcal{A}_m):=\left\{\tbinom{\xi_m}{\eta_m}\in D(C_m)\times \mathrm{W}^{1,1}\big([-1,0],X_m\big): \ \eta_m(0)=\xi_m  \right\},
\nonumber
\end{equation}
and the operators
\begin{equation*}
\mathcal{B}_m:=\left(\begin{array}{cc} 0 & \Phi_m \\ 0 & 0 \end{array}\right),  \quad D(\mathcal{B}_m):=\mathcal{E}_m,
\end{equation*}
corresponding to the spatially discretized (ordinary) delay equations
\begin{equation}
\left\{
\begin{aligned}
\frac{\mathrm{d}u^m(t)}{\mathrm{d}t}&=C_m u^m(t)+\Phi_m u^m_t, \qquad t\ge 0, \\
u^m(0)&=x_m\in X_m, \\
u^m_0&=f_m\in\mathrm{L}^1\big([-1,0],X_m\big).
\end{aligned}
\right.
\tag{DE$_{\text{m}}$}
\label{delay_m}
\end{equation}

\begin{thm}
Under these assumptions, the sequential, the Strang, and the weighted splittings are convergent for the delay semigroup.
\end{thm}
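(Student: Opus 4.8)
The plan is to reduce the statement to the abstract convergence results already proved, namely Theorems~\ref{thm:sq-conv-approx}, \ref{thm:st-conv-approx}, and \ref{thm:w-conv-approx}, applied on the product space $\mathcal{E}$ with the roles $X\rightsquigarrow\mathcal{E}$, $(A,B)\rightsquigarrow(\mathcal{A},\mathcal{B})$, $(P_m,J_m)\rightsquigarrow(\mathcal{P}_m,\mathcal{J}_m)$, and approximate generators $(A_m,B_m)\rightsquigarrow(\mathcal{A}_m,\mathcal{B}_m)$. Assumption~\ref{gen_ass_abc} holds in this setting because $\mathcal{B}$ is bounded, so $\mathcal{G}=\mathcal{A}+\mathcal{B}$ is already closed, generates $\mathcal{U}$, and $D(\mathcal{A})\cap D(\mathcal{B})=D(\mathcal{A})$; moreover $\mathcal{P}_m,\mathcal{J}_m$ satisfy Definition~\ref{def:proj} by the preceding Proposition. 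Hence only the \emph{stability} and \emph{consistency} conditions of Definition~\ref{def:stab_conv} for the approximate semigroups $\mathcal{T}_m,\mathcal{S}_m$ generated by $\mathcal{A}_m,\mathcal{B}_m$ remain to be verified.

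First I would record the approximate semigroups explicitly. By the same block computation as for $\mathcal{A}$ (cf.\ \cite[Theorem 3.25]{Batkai-Piazzera} applied to $C_m,V_m,X_m$), the operator $\mathcal{A}_m$ generates
\[
\mathcal{T}_m(t)=\begin{pmatrix} V_m(t) & 0 \\ (V_m)_t & T_0(t) \end{pmatrix},
\]
with $T_0$ the left shift on $\mathrm{L}^1([-1,0],X_m)$ and $(V_m)_t$ the analogue of $V_t$ built from $V_m$; since $\mathcal{B}_m$ is bounded, $\mathcal{S}_m(t)=\mathcal{I}_m+t\mathcal{B}_m$. The crucial point for stability is that both families are quasi-contractions with constants \emph{independent of} $m$: using $\|V_m(t)\|\le 1$, the contractivity of $T_0$ on $\mathrm{L}^1$, and the estimate $\|(V_m)_h x\|_{\mathrm{L}^1}\le h\|x\|$, a direct estimate of the block structure gives $\|\mathcal{T}_m(h)\|\le 1+h\le\mathrm{e}^h$ for $h\in[0,1]$, whence $\|\mathcal{T}_m(h)\|\le\mathrm{e}^h$ for all $h\ge 0$ by the semigroup property; likewise $\|\mathcal{S}_m(h)\|\le 1+h\|\Phi_m\|\le\mathrm{e}^{c\|\Phi\|h}$ from $\|\Phi_m\|\le c\|\Phi\|$. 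Submultiplicativity then yields $\|[\mathcal{S}_m(h)\mathcal{T}_m(h)]^k\|\le\mathrm{e}^{k\omega h}$, and since the convex combination in the weighted scheme obeys the same product bound, both \eqref{stab_approx} and \eqref{stab_approx_w} hold with $M=1$ (cf.\ Remark~\ref{rem:stab_cond}).

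Next I would verify consistency. For $\binom{\xi}{\eta}\in D(\mathcal{A})$ a block computation, using that $P_m$ commutes with $\mathrm{d}/\mathrm{d}\sigma$, gives
\[
\mathcal{J}_m\mathcal{A}_m\mathcal{P}_m\binom{\xi}{\eta}=\binom{J_mC_mP_m\xi}{\widetilde J_m\widetilde P_m\eta'},
\]
whose first entry tends to $C\xi$ by the consistency of $C_m$ in Assumption~\ref{ass_approx_delay}(a) and whose second entry tends to $\eta'$ in $\mathrm{L}^1$ by the convergence $\widetilde J_m\widetilde P_m g\to g$ established in the preceding Proposition; thus the sum converges to $\mathcal{A}\binom{\xi}{\eta}$. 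Similarly $\mathcal{J}_m\mathcal{B}_m\mathcal{P}_m\binom{\xi}{\eta}=\binom{J_m\Phi_m\widetilde P_m\eta}{0}\to\binom{\Phi\eta}{0}=\mathcal{B}\binom{\xi}{\eta}$ by Assumption~\ref{ass_approx_delay}(b). This establishes Definition~\ref{def:stab_conv}(ii).

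With stability and consistency in place, Theorems~\ref{thm:sq-conv-approx}, \ref{thm:st-conv-approx}, and \ref{thm:w-conv-approx} apply directly and yield convergence of all three splittings for the delay semigroup. I expect the only genuinely non-routine step to be the \emph{uniform-in-$m$ stability}: one must check that the off-diagonal block $(V_m)_t$ does not destroy the quasi-contraction bound and that all constants are controlled solely by the uniform data $\|V_m(t)\|\le 1$ and $\|\Phi_m\|\le c\|\Phi\|$, never by $m$-dependent quantities. The remaining verifications are a routine transcription of the abstract framework to the product space $\mathcal{E}$.
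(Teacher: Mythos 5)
Your proposal is correct and follows essentially the same route as the paper: reduce to Theorems~\ref{thm:sq-conv-approx}--\ref{thm:w-conv-approx} on $\mathcal{E}$, verify the uniform-in-$m$ quasi-contraction bounds $\|\mathcal{T}_m(t)\|\le 1+t$ and $\|\mathcal{S}_m(t)\|\le 1+tc\|\Phi\|$ (the paper cites the estimates from \cite{Csomos-Nickel} where you derive them directly from the block structure), and check consistency by the same block computation using Assumption~\ref{ass_approx_delay} and Lebesgue's theorem. The only cosmetic omission is the explicit remark that $\mathcal{P}_m D(\mathcal{A})\subset D(\mathcal{A}_m)$ (i.e.\ $(\widetilde P_m f)(0)=P_m f(0)=P_m x$), which the paper records before the block computation.
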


\begin{proof}
By Theorems \ref{thm:sq-conv-approx}, \ref{thm:st-conv-approx}, and \ref{thm:w-conv-approx},  we have to show that the stability and consistency conditions of Definition \ref{def:stab_conv} are satisfied for the operators $\mathcal{A}_m$, $\mathcal{B}_m$.

\noindent It is clear that the operators $\mathcal{A}_m$ generate strongly continuous semigroups
\begin{equation}
\mathcal{T}_m(t):=\left(\begin{array}{cc} V_m(t) & 0 \\ V_t^m & T_0(t) \end{array}\right),
\nonumber
\end{equation}
where
\begin{equation}
(V_t^m x)(\sigma):=
\begin{cases}
V_m(t+\sigma)x, & \quad\mbox{if}\quad \sigma\in[-t,0], \\
0, & \quad\mbox{if}\quad \sigma\in[-1,-t).
\end{cases}
\nonumber
\end{equation}
From the calculations in the proof of \cite[Theorem 4.2]{Csomos-Nickel}, especially formulae \cite{Csomos-Nickel}/(15) and \cite{Csomos-Nickel}/(16), we deduce
\begin{equation*}
\left\|\mathcal{T}_m(t)\right\| \leq 1+t,
\end{equation*}
and further, by the assumptions on the operators $\Phi_m$, $\mathcal{B}_m$ generate norm-continuous semigroups
\begin{equation}
\mathcal{S}_m(t):=\left(\begin{array}{cc} I_m & \Phi_m \\ 0 & I \end{array}\right)
\nonumber
\end{equation}
satisfying
\begin{equation*}
\left\|\mathcal{S}_m(t)\right\| \leq 1+tc\|\Phi\|.
\end{equation*}
Hence, the stability conditions (i)(a) and (b) from Definition \ref{def:stab_conv} are satisfied. Further, from the same arguments as in the proof of \cite[Theorem 4.2]{Csomos-Nickel} we obtain that the stability conditions \eqref{stab_approx} and \eqref{stab_approx_w} are satisfied with $M=1$ and $\omega=1+c\|\Phi\|$. Here we also have to use the fact that $t/n$ denotes the time step, and $k$ is the number of steps we have already done in order to reach the time level $t$. This means that the relation $k\le n$ always holds. Then we can apply the following estimate:
\begin{equation}
\|[\mathcal{S}_m(t/n)\mathcal{T}_m(t/n)]^k\| \le \|[\mathcal{S}_m(t/n)\mathcal{T}_m(t/n)]^n\|
\le \|\mathcal{S}_m(t/n)\|^n\|\mathcal{T}_m(t/n)\|^n \le \mathrm{e}^{t(1+c\|\Phi\|)}.
\nonumber
\end{equation}
Now we only have to check the consistency conditions. The condition (ii)(b) in Definition \ref{def:stab_conv} follows immediately form our Assumption \ref{ass_approx_delay} (b).

\noindent For condition (ii)(a), observe first that if $\tbinom{x}{f}\in D(\mathcal{A})$, then $f(0)=x$, implying $P_m f(0)=P_m x$, and so $\tbinom{P_m x}{\widetilde P_m f}\in D(\mathcal{A}_m)$. Hence, $\mathcal{P}_m D(\mathcal{A})\subset D(\mathcal{A}_m)$. Further, for $\tbinom{x}{f}\in D(\mathcal{A})$, we have that
\begin{equation*}
\mathcal{J}_m\mathcal{A}_m\mathcal{P}_m\tbinom{x}{f} = \mathcal{J}_m\mathcal{A}_m \tbinom{P_m x}{\widetilde P_m f}= \mathcal{J}_m \tbinom{C_mP_m x}{(\widetilde P_m f)'}=\tbinom{J_mC_mP_m x}{\widetilde J_m \widetilde P_m f'}.
\end{equation*}
Again, by our assumption on the operators $C_m$ and by Lebesgue's Theorem we see that
\begin{equation*}
\mathcal{J}_m\mathcal{A}_m\mathcal{P}_m\tbinom{x}{f} \to \tbinom{Cx}{f'}.
\end{equation*}
Hence, the desired stability and consistency conditions are satisfied meaning that the splittings are convergent.
\end{proof}


\section*{Acknowledgments}
This research was supported by the DAAD-PPP-Hungary Grant, project number D/05/01422. A. B. was supported by the J\'{a}nos Bolyai
Research Fellowship, the Bolyai-Kelly Scholarship, the OTKA Grant Nr. F049624, and by the Alexander von Humboldt-Stiftung.

The authors thank B\'{a}lint Farkas (Darmstadt) for helpful discussions and suggestions.

\bibliographystyle{plain}


\end{document}